\long\def\/*#1*/{}
\newcommand{\prob}{\ensuremath{\xrightarrow{\mathbbm{P}}}} 
\newcommand{\dist}{\ensuremath{\xrightarrow{\mathcal{L}}}} 
\newtheorem{theorem}{Theorem}
\newtheorem{lemma}[theorem]{Lemma}
\newtheorem{proposition}[theorem]{Proposition}
\newtheorem{remark}[theorem]{Remark}
\begin{document}

\title{Universality of Load Balancing Schemes on Diffusion Scale}

\author[1]{D. Mukherjee\footnote{Corresponding author: \texttt{d.mukherjee@tue.nl}}}
\author[1,2]{S.C. Borst}
\author[1]{J.S.H. van Leeuwaarden}
\author[3]{P.A. Whiting}
\affil[1]{Department of Mathematics and Computer Science,
Eindhoven University of Technology, The Netherlands}
\affil[2]{Alcatel-Lucent Bell Labs, Murray Hill, NJ, USA}
\affil[3]{Department of Engineering,
Macquarie University, North Ryde, NSW, Australia}

\renewcommand\Authands{ and }

\date{\today}

\maketitle 

\begin{abstract}
We consider a system of $N$~parallel queues with identical exponential service rates and a single dispatcher where tasks arrive as a Poisson process. When a task arrives, the dispatcher always assigns it to an idle server, if there is any, and to a server with the shortest queue among $d$~randomly selected servers otherwise ($1 \leq d \leq N$). This load balancing scheme subsumes the so-called Join-the-Idle Queue (JIQ) policy ($d = 1$) and the celebrated Join-the-Shortest Queue (JSQ) policy ($d = N$) as two crucial special cases.
We develop a stochastic coupling construction to obtain the diffusion limit of the queue process in the Halfin-Whitt heavy-traffic regime, and establish that it does not depend on the value of~$d$, implying that assigning tasks to idle servers is sufficient for diffusion level optimality.
\end{abstract}



\section{Introduction}

In the present paper we establish a universality property for a broad class of load balancing schemes in a many-server heavy-traffic regime. While the specific features of load balancing policies may considerably differ, the principal purpose is to distribute service requests or tasks among servers or geographically distributed nodes in parallel-processing systems. Well-designed load balancing schemes provide an effective mechanism for improving relevant performance metrics experienced by users while achieving high resource utilization levels.
The analysis and design of load balancing policies has attracted strong renewed interest in the last several years, mainly motivated by significant challenges involved in assigning tasks (e.g.~file transfers, compute jobs, data base look-ups) to servers in large-scale data centers.

Load balancing schemes can be broadly categorized as static (open-loop), dynamic (closed-loop), or some intermediate blend, depending on the amount of real-time feedback or state information (e.g.~queue lengths or load measurements) that is used in assigning tasks.
Within the category of dynamic policies, one can further distinguish between push-based and pull-based approaches, depending on whether the initiative resides with a dispatcher actively collecting feedback from the servers, or with the servers advertizing their availability or load status. The use of state information naturally allows dynamic policies to achieve better performance and greater resource pooling gains, but also involves higher implementation complexity and potentially substantial communication overhead. The latter issue is particularly pertinent in large-scale data centers, which deploy thousands of servers and handle massive demands, with service requests coming in at huge rates. 

In the present paper we focus on a basic scenario of $N$~parallel queues with identical servers, exponentially distributed service requirements, and a service discipline at each individual server that is oblivious to the actual service requirements (e.g.~FCFS).
In this canonical case, the so-called Join-the-Shortest-Queue (JSQ) policy has 
several strong optimality properties, and in particular minimizes the overall mean delay among the class of non-anticipating load balancing policies that do not have any advance knowledge of the service requirements \cite{EVW80,W78,Winston77}.
(Relaxing any of the three above-mentioned assumptions tends to break the optimality properties of the JSQ policy, and renders the delay-minimizing policy quite complex or even counter-intuitive,
see for instance \cite{GHSW07,Jonckheere06,Whitt86}.)

In order to implement the JSQ policy, a dispatcher requires instantaneous knowledge of the queue lengths at all the servers, which may give rise to a substantial communication burden, and may not be scalable in scenarios with large numbers of servers.
The latter issue has motivated consideration of so-called JSQ($d$) policies, where the dispatcher assigns an incoming task to a server with the shortest queue among $d$~servers selected uniformly at random.
Mean-field limit theorems in Mitzenmacher~\cite{Mitzenmacher01}
and Vvedenskaya {\em et al.}~\cite{VDK96} indicate that even a value as small as $d = 2$ yields significant performance improvements in a many-server regime, in the sense that the tail of the queue length distribution at each indvidual server falls off much more rapidly compared to a strictly random assignment policy ($d = 1$). This is commonly referred to as the ``power-of-two'' effect. While these results were originally proved for exponential service requirement distributions, they have recently been extended to general service requirement distributions in Bramson {\em et al.}~\cite{BLP12}.

In the present paper we consider a related but different family of load balancing schemes termed JIQ($d$), where the dispatcher always assigns an incoming task to an idle server, if there is any, and to a server with the shortest queue among $d$~uniformly at random selected servers otherwise. Observe that the JIQ($N$) scheme coincides with the ordinary JSQ policy, while the JIQ($1$) scheme corresponds to the so-called Join-the-Idle-Queue
(JIQ) policy considered in~\cite{BB08,LXKGLG11,S15}.
The latter policy offers particularly attractive properties, both from a scalability perspective and from a performance viewpoint. Since only knowledge of the empty queues is required, it suffices for servers to send an `invite' notice to the dispatcher whenever they become idle.  This generates at most one message per task and ensures low communication overhead even in large-scale systems with many servers. At the same time, fluid-limit theorems in Stolyar~\cite{S15} indicate that, for any fixed subcritical load per server, the equilibrium probability of a task experiencing a wait because no idle server is available, asymptotically vanishes in a regime where the number of servers grows large.

We consider a regime where the number of servers~$N$ grows large, but additionally assume that the capacity slack per server diminishes as $\beta / \sqrt{N}$, \emph{i.e.}, the load per server approaches unity as $1 - \beta / \sqrt{N}$, with $\beta > 0$ some positive coefficient. In terms of the aggregate traffic load and total service capacity, this scaling corresponds to the so-called Halfin-Whitt heavy-traffic regime which was introduced in the seminal paper~\cite{HW81} and has been extensively studied since. The set-up in~\cite{HW81}, as well as the numerous model extensions in the literature, predominantly concerned a setting with a single centralized queue and server pool, rather than a scenario with parallel queues. To the best of our knowledge, the only exception is a recent study of Eschenfeldt \& Gamarnik~\cite{EG15}, which considered a parallel-server system with the ordinary JSQ policy, and showed that in the Halfin-Whitt regime the diffusion-scaled system occupancy state weakly converges to a two-dimensional reflected Ornstein-Uhlenbeck process.

In the present paper we exploit a stochastic coupling construction to extend the latter result to the entire class of JIQ($d$) policies. We specifically establish that the diffusion limit, rather surprisingly, does not depend on the value of~$d$ at all, so that in particular the JIQ and JSQ policies yield the same diffusion limit. The latter property implies that in a many-server heavy-traffic regime, ensuring that tasks are assigned to idle servers whenever possible, e.g.~using a low-overhead invite mechanism, suffices to achieve optimality at the diffusion level, and not just at the fluid level as proved in Stolyar~\cite{S15} for the under-loaded scenario. It further suggests that using any additional queue length information beyond the knowledge of empty queues yields only limited performance gains in large-scale systems in the Halfin-Whitt heavy-traffic regime.

The remainder of the paper is organized as follows.
In Section~\ref{sec: model descr} we present a detailed model description and formulate the main result.
In Section~\ref{sec: coupling} we develop a stochastic coupling
construction to compare the system occupancy state under various task assignment policies.
We then combine in Section~\ref{sec: conv} the stochastic comparison results with some of the derivations in~\cite{EG15} to obtain the common diffusion limit and finally make a few concluding remarks in Section~\ref{sec:conclusion}.

\section{Model description}
\label{sec: model descr}
Consider a system with $N$~parallel queues with independent and identical servers having unit-exponential service rates and a single \emph{dispatcher}.
Tasks arrive at the dispatcher as a Poisson process of rate $\lambda_N$, and are instantaneously forwarded to one of the servers. Tasks can be queued at the various servers, possibly subject to a buffer capacity limit as further described below, but \emph{cannot} be queued at the dispatcher. The dispatcher always assigns an incoming task to an idle server, if there is any, and to a server with the shortest queue among $d$~uniformly at random selected servers otherwise ($1 \leq d \leq N$), ties being broken arbitrarily. The buffer capacity at each of the servers is~$b\geq 2$ (possibly infinite), and when a task is assigned to a server with $b$~pending tasks, it is instantly discarded.

As mentioned earlier, the above-described scheme coincides with
the ordinary Join-the-Shortest-Queue (JSQ) policy when $d = N$,
and corresponds to the so-called Join-the-Idle-Queue (JIQ) policy
considered in~\cite{BB08,LXKGLG11,S15} when $d = 1$.

 Under the JSQ policy, the dispatcher always assigns an incoming
task to the server with the minimum queue length.
As stated in the introduction, the JSQ policy has several strong
optimality properties in the symmetric Markovian scenario under
consideration.
In order to implement the JSQ policy however, a dispatcher requires
instantaneous knowledge of the queue lengths at all the servers,
which may give rise to a substantial communication burden,
and may not be scalable in scenarios with large numbers of servers.
In a recent study Eschenfeldt \& Gamarnik~\cite{EG15} characterized
the diffusion limit of the system occupancy state in the Halfin-Whitt
heavy-traffic regime.

 Under the JIQ policy, the dispatcher assigns an incoming task
to an idle server, if there is any, or to a uniformly at random
selected server otherwise.
This scheme is of particular interest because of its low
communication overhead, and can be implemented as follows.
When a server becomes idle, it sends an \textit{invite} message
to the dispatcher declaring that it is vacant.
Whenever a task arrives, the dispatcher looks at its list of invite
messages.
If there are any messages in the list, then it selects one arbitrarily,
assigns the task to the corresponding server, and discards
the selected invite message.
Otherwise the dispatcher assigns the task uniformly at random
to one of the servers.
In this way the number of messages exchanged per task is at most~$1$,
reducing communication overhead and ensuring scalability.
Stolyar~\cite{S15} recently proved that the probability that there
are invite messages approaches one, and hence the fraction of tasks
that incur a non-zero wait tends to zero, in a fluid regime where
the number of servers and total arrival rate grow large in proportion with $\lambda_N/N\to\lambda<1$ as $N\to\infty$.

In the present paper we consider the Halfin-Whitt heavy-traffic regime where the arrival rate increases with the number of servers as $\lambda_N = N-\beta\sqrt{N}$ for some $\beta>0$. 
We denote the class of above-described policies by $\Pi^{(N)}(d)$, where the superscript~$N$ indicates that the diversity parameter~$d$ is allowed to depend on the number of servers. 
For any policy $\Pi \in \Pi^{(N)}(d)$ and buffer size~$b$, let $\mathbf{Q}^\Pi = (Q_1^\Pi, Q_2^\Pi, \ldots, Q_b^\Pi)$, where $Q_i^\Pi$ is the number of servers with a queue length greater than or equal to $i = 1, \ldots, b$, including the possible task in service. 
Also, let $\mathbf{X}^\Pi = (X_1^\Pi, X_2^\Pi, \ldots, X_b^\Pi)$ be a properly centered and scaled version of the vector $\mathbf{Q}^{\Pi}$, with $X_1^\Pi = (Q_1^\Pi-N)/\sqrt{N}$ and $X_i^\Pi = Q_i^\Pi/\sqrt{N}$ for $i = 2, \dots, b$. 
The reason why $Q_1^\Pi$ is centered around~$N$ while $Q_i^\Pi$, $i = 2, \dots, b$, are not, is because the fraction of servers with exactly one task tends to one as $N$ grows large as we will see. 
In case of a finite buffer size $b < \infty$, when a task is discarded, we call it an \emph{overflow} event, and we denote by $L^\Pi(t)$ the total number of overflow events under policy~$\Pi$ up to time~$t$. 
\\

The next theorem states our main result. In the rest of the paper let $D$ be the set of all right continuous functions from $[0,\infty)$ to $\mathbbm{R}$ having left limits and let `$\dist$' denote  convergence in distribution. 

\begin{theorem}
\label{th: main}

For any policy $\Pi \in \Pi^{(N)}(d)$,
if for $i=1,2,\ldots$, $X_i^\Pi(0) \dist X_i(0)$ in $\mathbbm{R}$
as $N \to \infty$ with $X_i(0)=0$ for $i\geq 3$, then the processes
$\{X_i^\Pi(t)\}_{t \geq 0} \dist \{X_i(t)\}_{t \geq 0}$
in~$D$, where $X_i(t) \equiv 0$ for $i \geq 3$ and $(X_1(t), X_2(t))$
are unique solutions in $D \times D$ of the stochastic
integral equations
\begin{equation}\label{eq: main theorem}
\begin{split}
X_1(t) &=
X_1(0) + \sqrt{2} W(t) - \beta t + \int_0^t (-X_1(s)+X_2(s)) ds - U_1(t), \\
X_2(t) &= X_2(0) + U_1(t) + \int_0^t(-X_2(s)) ds,
\end{split}
\end{equation}
where $W$ is a standard Brownian motion and $U_1$ is
the unique non-decreasing non-negative process in~$D$ satisfying
$\int_0^\infty \mathbbm{1}_{[X_1(t)<0]} dU_1(t) = 0$.
\end{theorem}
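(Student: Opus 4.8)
The plan is to deduce the result for an arbitrary policy $\Pi \in \Pi^{(N)}(d)$ from the known diffusion limit for the JSQ policy (the case $d = N$) established by Eschenfeldt \& Gamarnik~\cite{EG15}, by showing that on the diffusion scale the occupancy process under $\Pi$ is indistinguishable from the one under JSQ. Run the $\Pi$-system and the JSQ-system on a common probability space with the same Poisson arrival clock and the same exponential service clocks, started from a common initial configuration distributed as the given $\mathbf Q^\Pi(0)$, so that the hypothesis $X_i^\Pi(0)\dist X_i(0)$ with $X_i(0)=0$ for $i\geq 3$ holds for both systems at once. Because every policy in $\Pi^{(N)}(d)$ sends an incoming task to an idle server whenever one exists, the two systems take identical routing and service decisions at every event, the sole exception being at an arrival epoch at which \emph{all} $N$ servers are busy and the $d$ servers sampled by $\Pi$ all have queue length strictly larger than the current minimum queue length (arrival epochs causing an overflow under a finite buffer are treated the same way). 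Call these \emph{discrepancy epochs}. The coupling of Section~\ref{sec: coupling} should yield a pathwise comparison showing that away from discrepancy epochs the $\ell_1$-gap $\|\mathbf Q^\Pi(t)-\mathbf Q^{\mathrm{JSQ}}(t)\|$ cannot grow, while each discrepancy epoch increases it by at most a constant; hence $\|\mathbf Q^\Pi(t)-\mathbf Q^{\mathrm{JSQ}}(t)\|\leq C\,D_N(t)$, where $D_N(t)$ is the number of discrepancy epochs in $[0,t]$.

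The crux is to show $D_N(T)=o_{\mathbbm P}(\sqrt N)$ for every $T>0$, which needs two estimates. First, given the occupancy state at an all-servers-busy arrival epoch, the conditional probability that it is a discrepancy epoch equals $\binom{Q_2}{d}/\binom{N}{d}$ on the event $Q_2^\Pi<N$ (which has probability tending to $1$, since $Q_2^\Pi=N$ corresponds to $X_2^\Pi\approx\sqrt N$), and $\binom{Q_2}{d}/\binom{N}{d}=\prod_{j=0}^{d-1}\frac{Q_2-j}{N-j}\leq Q_2/N$ for every $d\geq 1$; combined with the a~priori bound $\sup_{t\leq T}Q_2^\Pi(t)=O_{\mathbbm P}(\sqrt N)$ --- which I would extract from the stochastic comparison of Section~\ref{sec: coupling} together with the corresponding estimate for JSQ in~\cite{EG15} --- this makes the per-epoch discrepancy probability $O_{\mathbbm P}(1/\sqrt N)$, uniformly in $d$. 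Second, the number of arrival epochs in $[0,T]$ at which no idle server is available is itself only $O_{\mathbbm P}(\sqrt N)$: the system is full for only an $O(1/\sqrt N)$ fraction of the time, which is exactly the diffusion-scale content of the reflection term $U_1$ and is quantified from the derivations of~\cite{EG15}. Multiplying the two, $\mathbbm E[D_N(T)]=O(1)$, so $D_N(T)=O_{\mathbbm P}(1)=o_{\mathbbm P}(\sqrt N)$ by Markov's inequality. The same counting argument gives $\sup_{t\leq T}Q_i^\Pi(t)=O_{\mathbbm P}(1)$ for $i\geq 3$ (a queue length $\geq 3$ is created essentially only at discrepancy-type epochs, and $Q_i^\Pi(0)=o_{\mathbbm P}(\sqrt N)$ by hypothesis), hence $X_i^\Pi\prob 0$ in $D$ for $i\geq 3$.

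Dividing the pathwise bound by $\sqrt N$ yields $\sup_{t\leq T}\bigl(|X_1^\Pi(t)-X_1^{\mathrm{JSQ}}(t)|+|X_2^\Pi(t)-X_2^{\mathrm{JSQ}}(t)|\bigr)\prob 0$ for every $T>0$. Since~\cite{EG15} establishes $(X_1^{\mathrm{JSQ}},X_2^{\mathrm{JSQ}})\dist(X_1,X_2)$ in $D\times D$, with $(X_1,X_2)$ the unique solution of~\eqref{eq: main theorem} --- uniqueness following from the Lipschitz drift together with the one-sided Skorokhod reflection map --- the converging-together lemma gives $(X_1^\Pi,X_2^\Pi)\dist(X_1,X_2)$ in $D\times D$, and together with $X_i^\Pi\prob 0$ for $i\geq 3$ this is precisely the assertion of the theorem.

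I expect the main obstacle to be the second estimate above: controlling, at the sharp $\sqrt N$ scale and uniformly over the whole class $\Pi^{(N)}(d)$, the number of arrival epochs with no idle server (equivalently the size of the reflection/overflow term). The naive bound here is $O_{\mathbbm P}(N)$, which is useless; extracting the factor $\sqrt N$ genuinely requires diffusion-scale information about $Q_1$, and this is where the comparison with JSQ and the reuse of the estimates in~\cite{EG15} become indispensable. A secondary technical point is making the pathwise comparison of Section~\ref{sec: coupling} precise, in particular re-coupling the two systems after a discrepancy epoch so that the $\ell_1$-gap indeed stays within $O(1)$ of $D_N(t)$.
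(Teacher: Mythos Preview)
Your overall strategy coincides with the paper's: bound the occupancy gap between $\Pi$ and a reference system by a counting process, then show that counting process is $o_{\mathbbm P}(\sqrt N)$ via precisely the two estimates you name (tightness of $Q_2/\sqrt N$ and of the reflection term), both imported from~\cite{EG15} through the stochastic comparison of Section~\ref{sec: coupling}. The paper, however, does \emph{not} couple $\Pi$ directly with JSQ. Instead it inserts two auxiliary buffer-$2$ systems, $\Pi_1=\Pi(N,d_1)$ and $\Pi_2=\Pi(N,1)$, and uses Proposition~\ref{prop: stoch_ord}(i)--(ii) to obtain the pathwise bound $|Q_i^{\Pi}(t)-Q_i^{\Pi_2}(t)|\le 2L^{\Pi_2}(t)$ for every $i$. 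The overflow count $L^{\Pi_2}$ is the paper's surrogate for your $D_N$, and its tightness (Lemma~\ref{lem: tight}) is then proved by dominating $\Pi_2$ by the buffer-$2$ JSQ $\Pi_3=\Pi(N,N)$ and invoking exactly your two estimates for~$\Pi_3$.

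The gap in your proposal is the step you downgrade to ``secondary'': the claim that $\|\mathbf Q^{\Pi}-\mathbf Q^{\mathrm{JSQ}}\|\le C\,D_N(t)$ under a direct coupling. Once a discrepancy has occurred the two systems are in different states, and at a subsequent \emph{non}-discrepancy arrival they can still route to different levels---for instance when one system has an idle server and the other does not---so the $\ell_1$ gap is not obviously controlled by $D_N$ alone. Nor does Proposition~\ref{prop: stoch_ord} give you a direct componentwise comparison between $\Pi$ (buffer~$b$) and JSQ: its hypothesis $l_0=\ldots=l_{b-2}=d_0=\ldots=d_{b-2}$ fails for $b\ge 3$ when $d<N$, so even your a~priori bound $\sup_{t\le T}Q_2^{\Pi}(t)=O_{\mathbbm P}(\sqrt N)$ cannot be read off without a detour. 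The buffer-$2$ truncation is exactly the device that repairs both issues at once: it makes Proposition~\ref{prop: stoch_ord} applicable (only $l_0=d_0=N$ is needed), and it converts the elusive ``discrepancy count'' into the concrete overflow count $L^{\Pi_2}$, for which Proposition~\ref{prop: stoch_ord}(ii) yields the clean sample-path inequality you need. In short, your two estimates are the right ones, but the comparison mechanism that makes them usable is the buffer-$2$ sandwich, not a direct $\Pi$-versus-JSQ coupling.
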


The above result is proved in~\cite{EG15} for the ordinary JSQ
policy.
Our contribution is to develop a stochastic ordering construction
and establish that, somewhat remarkably, the diffusion limit is
the same for any policy in $\Pi^{(N)}(d)$.
In particular, the JIQ and JSQ policies yield the same diffusion limit.
The latter property implies that in the Halfin-Whitt heavy-traffic regime,
assigning tasks to idle servers, e.g.~through a light-weight invite
mechanism, suffices to achieve optimality at the diffusion level.
It further suggests that using any additional queue length information
beyond the knowledge of empty queues yields only limited performance
gains in large-scale systems in the Halfin-Whitt heavy-traffic regime.

\begin{remark}\textnormal{
We note that as in~\cite{EG15} we assume the convergence of the
initial state, which implies that the process has to start from
a state in which the number of vacant servers as well as the number
of servers with two tasks scale with $\sqrt{N}$,
and the number of servers with three or more tasks is $o(\sqrt{N})$.}
\end{remark}

\section{Coupling and stochastic ordering}
\label{sec: coupling}

In this section we prove several stochastic comparison results for the
system occupancy state under various load balancing schemes for a fixed
number of queues~$N$ (and hence we shall often omit the superscript~$N$
in this section).
These stochastic ordering results will be leveraged in the next section
to prove the main result stated in Theorem~\ref{th: main}.

In order to bring out the full strength of the stochastic comparison results,
we will in fact consider a broader class of load balancing schemes
$\Pi^{(N)} := \{\Pi(d_0, d_1, \ldots, d_{b-1}): d_0 = N, 1 \leq d_i \leq N,
1 \leq i \leq b-1, b \geq 2\}$, and show that Theorem~\ref{th: main} actually
holds for this entire class.
In the scheme $\Pi(d_0, d_1, \ldots, d_{b-1})$, the dispatcher assigns
an incoming task to the server with the minimum queue length among $d_k$
(possibly function of~$N$) servers selected uniformly at random when the
minimum queue length across the system is~$k$, $k = 0, 1, \ldots, b - 1$.
As before, $b$ represents the buffer size, and when a task is assigned
to a server with $b$~outstanding tasks, it is instantly discarded.

\subsection{Stack formation and deterministic ordering}
\label{subsec: det_ord}

Let us consider the servers arranged in non-decreasing order of their queue lengths. Each server along with its queue can be thought of as a stack of items. The ensemble of stacks then represent the empirical CDF of the queue length distribution, and the $i^{th}$ horizontal bar corresponds to $Q_i^{\Pi}$ (for the concerned policy $\Pi$). The items are added to and removed from the various stacks according to some rule. Before proceeding to the coupling argument, we first state and prove a deterministic comparison result under the above setting.

Consider two ensembles $A$ and $B$ with the same total number of stacks. The stacks in ensemble $A$ have a maximum capacity of $b$ items and those in ensemble $B$ have a maximum capacity of $b'$ items with $b\leq b'$. For two such ensembles a step is said to follow $Rule(k,l,l_A,l_B)$ if either addition or removal of an item in both ensembles is done in that step as follows:
\begin{enumerate}[(i)]
\item Removal: An item is removed (if any) from the $k^{th}$   stack from both ensembles or an item is removed from some stack in ensemble $A$ but no removal is done in ensemble $B$.
\item Addition: 
\begin{itemize}
\item[(ii.a)] System A: If the minimum stack height is less than $b-1$, then the item is added to the $l^{th}$  stack. Else, the item is added to the $l_A^{th}$   stack. If the item lands on a stack with height $b$, then it is dropped.
\item[(ii.b)] System B: If the minimum stack height is less than $b-1$, then the item is added to the $l^{th}$  stack. Otherwise if the minimum stack height is precisely equal to $b-1$, the item is added to the $l_B^{th}$  stack. When the minimum stack height in the system is at least $b$, the item can be sent to any stack. If the item lands on a stack with height $b'$, then it is dropped.
\end{itemize}
\end{enumerate}
Then we have the following result.
\begin{proposition}\label{prop: det_ord}
Consider two ensembles $A$ and $B$ as described above with the total number of stacks being $N$, stack capacities being $b$ and $b'$ respectively, $b\leq b'$ and with $\mathbf{Q}^A\leq \mathbf{Q}^B$ component-wise \emph{i.e,} $Q^A_i\leq Q^B_i$ for all $i\geq 1$. The component-wise ordering is preserved if at any step $Rule(k,l,l_A,l_B)$ is followed with $l_A\geq l_B$ and either $l=1$ or $l\geq l_B$.
\end{proposition}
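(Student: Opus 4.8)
The plan is to prove the proposition by induction on the number of steps, maintaining the invariant $\mathbf{Q}^A \le \mathbf{Q}^B$ component-wise. Since a single step is either a removal or an addition, and since any step preserving the ordering composes with later steps, it suffices to show that one step of $Rule(k,l,l_A,l_B)$ — under the stated hypotheses $l_A \ge l_B$ and ($l=1$ or $l \ge l_B$) — takes a pair of configurations with $\mathbf{Q}^A \le \mathbf{Q}^B$ to another such pair. The key observation throughout is the translation between the ``stack height'' description and the vector $\mathbf{Q}$: adding an item to the $j^{th}$ shortest stack, when that stack currently has height $h$, increments exactly $Q_{h+1}$ by one (provided $h < b$; otherwise nothing changes), and removing an item from the $j^{th}$ stack of height $h \ge 1$ decrements $Q_h$ by one. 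So the whole argument reduces to tracking how the relevant stack heights compare across the two ensembles.

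For the \textbf{removal} step I would argue as follows. If an item is removed from the $k^{th}$ stack of both ensembles, let $h_A$ and $h_B$ be the respective heights of the $k^{th}$ shortest stack; because $\mathbf{Q}^A \le \mathbf{Q}^B$ and the stacks are sorted, one has $h_A \le h_B$, and the removals decrement $Q^A_{h_A}$ and $Q^B_{h_B}$ respectively. A short case check (using that the vectors are non-increasing in the sense that $Q_i \ge Q_{i+1}$ — which holds because these are counts of stacks of height $\ge i$) shows the component-wise ordering is preserved: the only component that could go wrong is $Q^A_{h_A}$ versus $Q^B_{h_A}$, and here we use the sorted structure plus $h_A\le h_B$. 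The alternative sub-case — a removal in $A$ only — can only decrease coordinates of $\mathbf{Q}^A$, so the inequality is trivially maintained.

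The \textbf{addition} step is the crux. Write $m_A, m_B$ for the minimum stack heights in $A$ and $B$; since $\mathbf{Q}^A \le \mathbf{Q}^B$ we have $m_A \le m_B$. I would split into the cases dictated by the rule: (a) both minima below $b-1$, so both items go to the $l^{th}$ stack; (b) one or both minima equal to $b-1$; (c) $m_B \ge b$, where $B$'s item may go anywhere (and $A$'s item lands on a stack of height possibly $b$, i.e.\ may be dropped). In case (a) the target is the same index $l$ in both, with heights $h_A^{(l)} \le h_B^{(l)}$; if $h_A^{(l)} < b$ the item increments $Q^A_{h_A^{(l)}+1}$, and one checks the ordering survives exactly as in the removal case. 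The delicate situations are those where $A$'s item is placed at index $l_A$ while $B$'s is placed at index $l_B$ (or dropped, or placed freely): here the hypotheses $l_A \ge l_B$ and $l \ge l_B$ are precisely what is needed so that the stack in $A$ receiving the increment is ``at least as far to the right'' (hence at least as tall) as the one in $B$, guaranteeing that if $A$'s increment actually happens then the corresponding $B$-stack is tall enough that either $B$ also increments a coordinate $\ge$ the one $A$ increments, or $A$'s item is dropped. I would organize this as a verification that in every admissible placement, for each index $i$, $Q^A_i$ increases only if $Q^B_i$ was already strictly larger or also increases.

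The main obstacle I anticipate is the bookkeeping in the addition step when the two ensembles have different ``phase'' — e.g.\ $m_A = b-2$ but $m_B = b-1$, or $m_A \le b-1 < b \le m_B$ — so that $A$ follows one branch of the rule while $B$ follows another. In those mixed regimes one must carefully use the hypotheses on $l, l_A, l_B$ together with the sortedness and the fact that $\mathbf{Q}^A \le \mathbf{Q}^B$ to rule out the single bad event ``$Q^A_i$ jumps above $Q^B_i$''. I would handle this by reducing everything to a one-line lemma: if $\mathbf{Q}^A \le \mathbf{Q}^B$ and we add an item to a stack of height $h_A$ in $A$ and (if at all) to a stack of height $h_B$ in $B$ with $h_A \ge h_B$, then the ordering is preserved — and then checking that each branch of $Rule(k,l,l_A,l_B)$, under the stated hypotheses, indeed forces $h_A \ge h_B$ for the stacks actually touched (with the convention that a dropped item corresponds to $h_A = b$, which is harmless since it changes nothing). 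Everything else is a routine induction.
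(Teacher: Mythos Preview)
Your overall architecture — induction on event times, separate treatment of removals and additions, and a case split according to the minimum heights — matches the paper's proof. The removal argument and case~(a) of the addition are fine and essentially what the paper does.

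The gap is your proposed ``one-line lemma'': \emph{if $\mathbf{Q}^A \le \mathbf{Q}^B$ and we add an item to a stack of height $h_A$ in $A$ and to a stack of height $h_B$ in $B$ with $h_A \ge h_B$, then the ordering is preserved.} This statement is false. Take $N=3$, $\mathbf{Q}^A=\mathbf{Q}^B=(2,1,0)$ (heights $0,1,2$ in both). Add in $A$ to a stack of height $h_A=1$ and in $B$ to a stack of height $h_B=0$. Then $h_A\ge h_B$, but afterwards $\mathbf{Q}^A=(2,2,0)$ and $\mathbf{Q}^B=(3,1,0)$, so $Q^A_2>Q^B_2$. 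The reversed inequality $h_A\le h_B$ is equally false as a sufficient condition (try $\mathbf{Q}^A=(2,0,0)$, $\mathbf{Q}^B=(2,2,0)$, $h_A=0$, $h_B=2$). Moreover, your second claim — that the rule forces $h_A\ge h_B$ — also fails: in case~(a), the common index $l$ gives $h_A=I_A(l)\le I_B(l)=h_B$, not the other way around; and your inference ``at least as far to the right, hence at least as tall'' compares a stack in $A$ with a stack in $B$, where ``farther right implies taller'' is only valid \emph{within} one sorted ensemble.

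What actually drives the argument is not a height comparison but a \emph{position-and-count} inequality. The paper defines $I_\Pi(c)=\max\{i:Q^\Pi_i\ge N-c+1\}$ (the height of the $c^{\text{th}}$ stack) and uses that, at the only dangerous coordinate $i=I_A(\cdot)+1$, one has by definition $Q^A_i<N-l+1$ while, whenever $i\le I_B(\cdot)$, one has $Q^B_i\ge N-l_B+1$; the hypothesis $l\ge l_B$ (or $l_A\ge l_B$) then yields $Q^A_i+1\le N-l+1\le N-l_B+1\le Q^B_i$, giving the strict slack needed. The $l=1$ alternative is handled separately by noting that $A$'s increment then lands at an index $\le b-1$, where $Q^B$ equals $N$. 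So the fix is to keep the index information (the $l$'s) in the analysis rather than collapsing to heights; once you do that, your case structure goes through exactly as the paper's does.
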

Before diving deeper into the proof of this proposition, let us discuss the high-level intuition behind it.
First observe that, if $\mathbf{Q}^A\leq \mathbf{Q}^B$, and an item is added (removed) to (from) the stack with the same index in both ensembles, then the component-wise ordering will be preserved. Hence, the preservation of ordering at the time of removal, and at the time of addition when, in both ensembles, the minimum stack height is less than $b-1$, is fairly straightforward.

Now, in other cases of addition, since in ensemble $A$ the stack capacity is $b\ (\leq b')$, if the minimum stack height in ensemble $B$ is at least $b$,  the ordering is preserved trivially. This leaves us with only the case when the minimum stack height in ensemble $B$ is precisely equal to $b-1$. In this case, when the minimum stack height in ensemble $A$ is also precisely equal to $b-1$, the preservation of the ordering follows from the assumption that $l_A\geq l_B$, which ensures that if in ensemble $A$, the item is added to some stack with $b-1$ items (and hence increases $Q^A_{b}$), then the same will be done in ensemble $B$ whenever $Q^A_b=Q^B_b$. Otherwise if the minimum stack height in ensemble $A$ is less than $b$, then assuming either $l=1$ (\emph{i.e.}~the item will be sent to the minimum queue) or $l\geq l_B$ (\emph{i.e.}~an increase in $Q^A_b$ implies an increase in $Q_b^B$) ensures the preservation of ordering.
\begin{proof}[Proof of Proposition~\ref{prop: det_ord}]
Suppose after following $Rule(k,l,l_A,l_B)$ the updated stack heights of ensemble $\Pi$ are denoted by $(\tilde{Q}_1^{\Pi},\tilde{Q}_2^{\Pi},\ldots)$, $\Pi= A,B$. We need to show $\tilde{Q}_i^A\leq\tilde{Q}_i^B$ for all $i\geq 1$.

For ensemble $\Pi$ let us define $I_{\Pi}(c):=\max\{i: Q^{\Pi}_i\geq N-c+1\}$, $c=1,\ldots,N$, $\Pi= A,B$. Define $I_{\Pi}(c)$ to be 0 if $Q_1^{\Pi}$ is (and hence all the $Q^{\Pi}_i$ values are) less than $N-c+1$. Note that $I_A(c)\leq I_B(c)$ for all $c= 1,2,\ldots N$ because of the initial ordering.

Now if the rule produces a removal of an item,  then the updated ensemble will have the values
\begin{equation}
\tilde{Q}^{\Pi}_i=
\begin{cases}
Q^{\Pi}_i-1, &\mbox{ for }i=I_{\Pi}(k),\\
Q^{\Pi}_i,&\mbox{ otherwise, }
\end{cases}
\end{equation}
if $I_{\Pi}(k)\geq 1$; otherwise all the $Q^{\Pi}_i$ values remain unchanged.
\begin{figure}
\begin{center}
\begin{tikzpicture}[scale=.5]
\draw (1,6)--(1,0)--(11,0)--(11,6);
\foreach \x in {10, 9,...,1}
	\draw (\x,1) rectangle (\x+1,0)
	(\x+.5,-.15) node [black,below] {\x} ;
\foreach \x in {10, 9,...,2}
	\draw (\x,2) rectangle (\x+1,1);
\foreach \x in {10, 9,...,4}
	\draw (\x,3) rectangle (\x+1,2);
\foreach \x in {10, 9,...,6}
	\draw (\x,4) rectangle (\x+1,3);
\foreach \x in {10, 9,...,9}
	\draw (\x,5) rectangle (\x+1,4);
\foreach \y in {1,2,...,5}
	\draw (11.15,\y-.5) node [black, right] {$Q_{\y}$};
\draw[thick] (7.5,-.67) circle [radius=.4];
\draw [fill=gray] (7,3) rectangle (8,4);

\draw (15,6)--(15,0)--(25,0)--(25,6);
\foreach \x in {10, 9,...,1}
	\draw (14+\x,1) rectangle (14+\x+1,0)
	(14+\x+.5,-.15) node [black,below] {\x} ;
\foreach \x in {24, 23,...,16}
	\draw (\x,2) rectangle (\x+1,1);
\foreach \x in {24, 23,...,18}
	\draw (\x,3) rectangle (\x+1,2);
\foreach \x in {24, 23,...,21}
	\draw (\x,4) rectangle (\x+1,3);
\foreach \x in {24, 23}
	\draw (\x,5) rectangle (\x+1,4);
\draw[dashed,thin, red] (20,3) rectangle (21,4);
\foreach \y in {1,2,...,5}
	\draw (25.15,\y-.5) node [black, right] {$Q_{\y}$};
\end{tikzpicture}
\caption{Removal of an item from the ensemble}\label{fig:removal}
\end{center}
\end{figure}
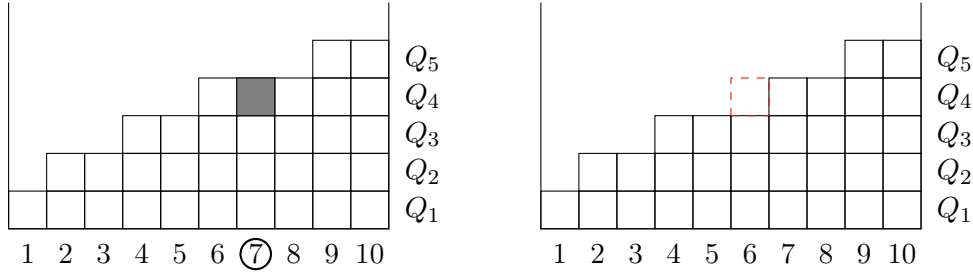
For example, in Figure~\ref{fig:removal}, $b=5$, $N=10$, and at the time of removal $k=7$. For this configuration $I_{\Pi}(7)=4$ since $Q^{\Pi}_4=5\geq 10-7+1=4$ but $Q^{\Pi}_5=2<4$. Hence, $Q_4^{\Pi}$ is reduced and all the other values remain unchanged. Note that the specific label of the servers does not matter here. So after the removal/addition of an item we consider the configuration as a whole by rearranging it again in non-decreasing order of the queue lengths.

Since in both $A$ and $B$ the values of $Q_i$ remain unchanged except for $i=I_A(k)$ and $I_B(k)$, it suffices to prove the preservation of the ordering for these two specific values of $i$. Now for $i=I_A(k)$, 
$$\tilde{Q}_i^A=Q_i^A-1\leq Q_i^B-1\leq\tilde{Q}_i^B.$$
If $I_B(k)=I_A(k)$, then we are done by the previous step. If $I_B(k)>I_A(k)$, then from the definition of $I_A(k)$ observe that $I_B(k)\notin\{i: Q_i^A\geq N-k+1\}$ and hence $Q_i^A<N-k+1$, for $i=I_B(k)$. Therefore, for $i=I_B(k)$,
$$\tilde{Q}_i^A\leq N-k\leq Q_i^B-1=\tilde{Q}_i^B.$$
On the other hand, if the rule produces the addition of an item to stack $l$, then the values will be updated as
\begin{equation}
\tilde{Q}^{\Pi}_i=
\begin{cases}
Q^{\Pi}_i+1, &\mbox{ for }i=I_{\Pi}(l)+1,\\
Q^{\Pi}_i,&\mbox{ otherwise, }
\end{cases}
\end{equation}
if $I_{\Pi}(l)<b_{\Pi}$, with $b_{\Pi}$ the stack-capacity of the corresponding system; otherwise the values remain unchanged.
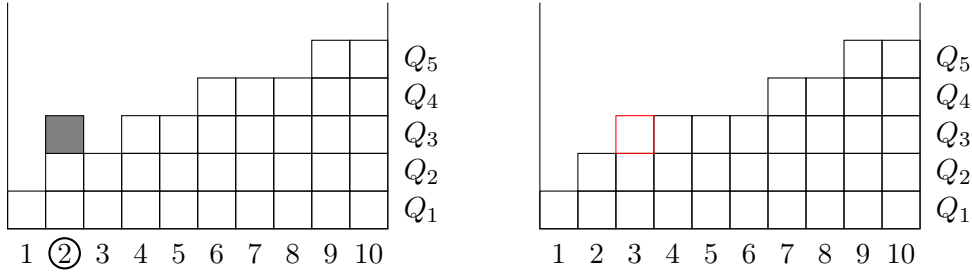
\begin{figure}
\begin{center}
\begin{tikzpicture}[scale=.5]
\draw (1,6)--(1,0)--(11,0)--(11,6);
\foreach \x in {10, 9,...,1}
	\draw (\x,1) rectangle (\x+1,0)
	(\x+.5,-.15) node [black,below] {\x} ;
\foreach \x in {10, 9,...,2}
	\draw (\x,2) rectangle (\x+1,1);
\foreach \x in {10, 9,...,4}
	\draw (\x,3) rectangle (\x+1,2);
\foreach \x in {10, 9,...,6}
	\draw (\x,4) rectangle (\x+1,3);
\foreach \x in {10, 9,...,9}
	\draw (\x,5) rectangle (\x+1,4);
\foreach \y in {1,2,...,5}
	\draw (11.15,\y-.5) node [black, right] {$Q_{\y}$};
\draw[thick] (2.5,-.65) circle [radius=.4];
\draw [fill=gray] (2,2) rectangle (3,3);

\draw (15,6)--(15,0)--(25,0)--(25,6);
\foreach \x in {10, 9,...,1}
	\draw (14+\x,1) rectangle (14+\x+1,0)
	(14+\x+.5,-.15) node [black,below] {\x} ;
\foreach \x in {24, 23,...,16}
	\draw (\x,2) rectangle (\x+1,1);
\foreach \x in {24, 23,...,18}
	\draw (\x,3) rectangle (\x+1,2);
\foreach \x in {24, 23,...,21}
	\draw (\x,4) rectangle (\x+1,3);
\foreach \x in {24, 23}
	\draw (\x,5) rectangle (\x+1,4);
\draw[red,thin] (17,2) rectangle (18,3);
\foreach \y in {1,2,...,5}
	\draw (25.15,\y-.5) node [black, right] {$Q_{\y}$};
\end{tikzpicture}
\caption{Addition of an item to the ensemble}\label{fig:addition}
\end{center}
\end{figure}
In Figure~\ref{fig:addition}, we have $l=2$ and for that particular configuration $I_{\Pi}(2)=2$. Hence, $Q_3^{\Pi}$ is incremented by one and the other variables remain fixed. 
\\Therefore, it is enough to consider the $i^{th}$ horizontal bars for $i=(I_A(l)+1), (I_B(l)+1)$ when $I_A(l)<b$. According to the addition rule there are several cases which we now consider one by one:
\begin{enumerate}
\item First we consider the case when in both ensembles the minimum stack height is less than $b-1$. Then by part (ii) of the rule both incoming items are added to the $l^{th}$   stack. When considering ensemble $B$ we may neglect the case $I_B(l)\geq b$ since then the value at $I_B(l)+1$ does not matter. Thus assume $I_B(l)\leq b-1$ and set $i=I_B(l)+1$ so that
$$\tilde{Q}_i^B=Q_i^B+1\geq Q_i^A+1\geq\tilde{Q}_i^A.$$
If $I_A(l)=I_B(l)$, then we are done by the previous case. If $I_A(l)+1\leq I_B(l)$, then it follows from the definition that $Q_i^A<N-l+1$ and $Q_i^B\geq N-l+1$, for $i=I_A(l)+1$. Hence,
$$\tilde{Q}_i^A=Q_i^A+1\leq N-l+1\leq Q_i^B\leq\tilde{Q}_i^B.$$
\item If the minimum stack height in $A$ is less than $b-1$ and that in $B$ is precisely $b-1$, then according to the rule the incoming item is added to the $l^{th}$   stack in $A$ and the $l_B^{th}$   stack in $B$. We here show that the component-wise ordering will be preserved if either $l=1$ or $l\geq l_B$. Observe that if $l=1$, then $I_A(l)<b-1$ which implies $I_A(l)+1\leq b-1$. But since the minimum stack height in $B$ is $b-1$, for all $i\leq b-1$ and in particular for $i=I_A(l)+1$, $\tilde{Q}^B_i=N\geq\tilde{Q}^A_i$. Now we consider the case when $l\geq l_B$. Also observe that the fact that the minimum stack height in $B$ is $b-1$, implies $I_B(l_B)\geq b-1\geq I_A(l_A)$ (since if $I_A(l)=b$, then nothing will be changed and so we do not need to consider this case). Then again if $I_A(l)=I_B(l_B)$, we are done.  Therefore, suppose $I_A(l)<I_B(l_B)$, which implies $I_A(l)+1\leq I_B(l_B)$. By definition, for $i=I_A(l)+1$, we have $Q_i^A<N-l+1$ and $Q_i^B\geq N-l_B+1\geq N-l+1$. Combining these two inequalities yields
$$\tilde{Q}_i^A=Q_i^A+1\leq N-l+1\leq Q_i^B=\tilde{Q}_i^B.$$
\item If the minimum stack height in both ensembles is $b-1$, then recall that the incoming item is added to the $l_A^{th}$   stack in $A$ and to the $l_B^{th}$   stack in $B$ with $l_A\geq l_B$. Arguing similarly as in the previous case we can conclude that the inequality is preserved.
\item Finally, if the minimum stack height in $B$ is larger than or equal to $b$, then the preservation of the inequality is trivial.
\end{enumerate}
Hence, the proof of the proposition is complete.
\end{proof}
\subsection{The coupling construction}\label{subsec: coupling}
We now construct a coupling between two systems $A$ and $B$ following any two schemes, say, $\Pi_A=\Pi(l_0,l_1,\ldots,l_{b-1})$ and $\Pi_B=\Pi(d_0, d_1,\ldots, d_{b'-1})$ in $\Pi^{(N)}$ respectively and combine it with Proposition~\ref{prop: det_ord} to get the desired stochastic ordering results.

For the arrival process we couple the two systems as follows. First we synchronize the  arrival epochs of the two systems. Now assume that in the systems $A$ and $B$, the minimum queue lengths are $k$ and $m$, respectively, $k\leq b-1$, $m\leq b'-1$. Therefore, when a task arrives, the dispatchers in $A$ and $B$ have to select $l_k$ and $d_m$ servers, respectively, and then have to send the task to the one having the minimum queue length among the respectively selected servers. Since the servers are being selected uniformly at random we can assume without loss of generality, as in the stack construction, that the servers are arranged in non-decreasing order of their queue lengths and are indexed in increasing order. 
Hence, observe that when a few server indices are selected, the server having the minimum of those indices will be the server with the minimum queue length among these. Hence, in this case the dispatchers in $A$ and $B$ select $l_k$ and $d_m$ random numbers (without replacement) from $\{1,2,\ldots,N\}$ and then send the incoming task to the servers having indices to be the minimum of those selected numbers. To couple the decisions of the two systems, at each arrival epoch a single random permutation of $\{1,2,\ldots,N\}$ is drawn, denoted by $\mathbf{\Sigma}^{(N)}:=(\sigma_1, \sigma_2,\ldots,\sigma_N)$. Define $\sigma_{(i)}:= \min_{j\leq i}\sigma_j$. Then observe that system $A$ sends the task to the server with the index $\sigma_{(l_k)}$ and system $B$ sends the task to the server with the index $\sigma_{(d_m)}$. Since at each arrival epoch both systems use a common random permutation, they take decisions in a coupled manner.

For the potential departure process, couple the service completion times of the $k^{th}$ queue in both scenarios, $k= 1,2,\ldots,N$. More precisely, for the potential departure process assume that we have a single synchronized exp($N$) clock independent of arrival epochs for both systems. Now when this clock rings, a number $k$ is uniformly selected from $\{1,2,\ldots,N\}$ and a potential departure occurs from the $k^{th}$ queue in both systems. If at a potential departure epoch an empty queue is selected, then we do nothing. In this way the two schemes, considered independently, still evolve according to their appropriate statistical laws.

Loosely speaking, our next result is based upon the following intuition: Suppose we have two systems $A$ and $B$ with two different schemes $\Pi_A$ and $\Pi_B$ having buffer sizes $b$ and $b'$ ($b\leq b'$) respectively. 
Also, for these two systems, initially, $Q^A_i\leq Q^B_i$ for all $i=1,\ldots,b$. Below we develop some intuition as to under what conditions the initial ordering of the $Q_i$-values will be preserved after one arrival or departure.

For the departure process if we ensure that departures will occur from the $k^{th}$ largest queue in both systems for some $k\in\{1,2,\ldots,N\}$ (ties are broken in any way), then observe that the ordering will be preserved after one departure.

In case of the arrival process, assume that when the minimum queue length in both systems is less than $b-1$, the incoming task is sent to the server with the same index. In that case it can be seen that the $Q_i$-values in $A$ and $B$ will preserve their ordering after the arrival as well. Next consider the case when the minimum queue length in both systems is precisely $b-1$. Now, in $A$, an incoming task can either be rejected (and will not change the $Q$-values at all) or be accepted (and $Q^{\Pi_A}_b$ will increase by 1). Here we ensure that if the incoming task is accepted in $A$, then it is accepted in $B$ as well unless $Q_b^{\Pi_A}<Q_b^{\Pi_B}$, in which case it is clear that the initial ordering will be preserved after the arrival. Finally, if the minimum queue length in $A$ is less than $b-1$ and that in $B$ is precisely $b-1$, then the way to ensure the inequality is either by making the scheme $\Pi_A$ send the incoming task to the server with minimum queue length (and hence, it will only increase the value of $Q_i^{\Pi_A}$ for some $i<b$, leaving other values unchanged) or by letting the selected server in $\Pi_A$ have a smaller queue length than the selected server in $\Pi_B$. The former case corresponds to the condition $d=N$ and the latter corresponds to the condition $d\leq d_{b-1}$, either of which has to be satisfied, in order to ensure the preservation of the ordering. This whole idea is formalized below.

\begin{proposition}\label{prop: stoch_ord}
For two schemes $\Pi_A=\Pi(l_0,l_1,\ldots,l_{b-1})$ and $\Pi_B=\Pi(d_0, d_1,\ldots, d_{b'-1})$ with $b\leq b'$ assume $l_0=\ldots=l_{b-2}=d_0=\ldots=d_{b-2}=d$, $l_{b-1}\leq d_{b-1}$ and either $d=N$ or $d\leq d_{b-1}$. Then the following holds:
\begin{enumerate}[{\normalfont (i)}]
\item\label{component_ordering} $\{Q^{ \Pi_A}_i(t)\}_{t\geq 0}\leq_{st}\{Q^{ \Pi_B}_i(t)\}_{t\geq 0}$ for $i=1,2,\ldots,b$
\item\label{upper bound} $\{\sum_{i=1}^b Q^{ \Pi_A}_i(t)+L^{ \Pi_A}(t)\}_{t\geq 0}\geq_{st} \{\sum_{i=1}^{b'} Q^{ \Pi_B}_i(t)+L^{ \Pi_B}(t)\}_{t\geq 0}$
\item\label{delta_ineq} $\{\Delta(t)\}_{t\geq 0}\geq \{\sum_{i=b+1}^{b'}Q_i^{ \Pi_B}(t)\}_{t\geq 0}$ almost surely under the coupling defined above,
\end{enumerate}
for any fixed $N\in\mathbbm{N}$ where $\Delta(t):=L^{ \Pi_A}(t)-L^{ \Pi_B}(t)$, provided that at time $t=0$ the above ordering holds.
\end{proposition}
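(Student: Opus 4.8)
The plan is to run the coupling of Section~\ref{subsec: coupling} and match every coupled transition to a step covered by Proposition~\ref{prop: det_ord}. It is convenient to extend $\mathbf{Q}^{\Pi_A}$ by zeros beyond index $b$ (harmless, since $b\le b'$ and the buffer caps $\Pi_A$ at $b$), so that $\mathbf{Q}^{\Pi_A}\le\mathbf{Q}^{\Pi_B}$ component-wise is meaningful for all indices and, by hypothesis, holds at $t=0$. Under the coupling the transitions are of two kinds: an arrival, at which both dispatchers act through the common permutation $\mathbf{\Sigma}^{(N)}$, and a potential departure from a common queue index $k$ driven by the synchronized $\exp(N)$ clock. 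A departure from index $k$ is exactly the Removal move of $Rule(k,\cdot,\cdot,\cdot)$ (each ensemble decrements its $k$-th stack if non-empty), and Proposition~\ref{prop: det_ord} covers it directly, including the sub-case where that queue is empty in $A$ but not in $B$. So the work is to check that every arrival realizes an Addition move of some $Rule(k,l,l_A,l_B)$ with $l_A\ge l_B$ and ($l=1$ or $l\ge l_B$); once that is done, induction over the (almost surely finite) set of events in any bounded interval yields $Q^{\Pi_A}_i(t)\le Q^{\Pi_B}_i(t)$ pathwise for all $t$ and $i=1,\dots,b$, and --- each marginal of the coupling being the prescribed CTMC, as noted in Section~\ref{subsec: coupling} --- this is statement~(i).

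The arrival analysis rests on two deterministic facts: $\mathbf{Q}^{\Pi_A}\le\mathbf{Q}^{\Pi_B}$ forces the current minimum queue length $k_A$ in $A$ to be at most that in $B$, call it $k_B$ (in the notation of the proof of Proposition~\ref{prop: det_ord}, $I_A(1)\le I_B(1)$), and $\sigma_{(i)}=\min_{j\le i}\sigma_j$ is non-increasing in $i$. With $k_A\le k_B$ one runs through the configurations: if $k_A\le k_B\le b-2$, both dispatchers sample $d$ servers and route to the \emph{same} stack $\sigma_{(d)}=l$, and with both minima below $b-1$ this is case~1 of Proposition~\ref{prop: det_ord}; if $k_A\le b-2$ and $k_B=b-1$, then $A$ routes to $\sigma_{(d)}=l$ and $B$ to $\sigma_{(d_{b-1})}=l_B$; if $k_A=k_B=b-1$, then $A$ routes to $\sigma_{(l_{b-1})}=l_A$ and $B$ to $\sigma_{(d_{b-1})}=l_B$; if $k_B\ge b$, the first $b$ coordinates of $\mathbf{Q}^{\Pi_B}$ are saturated while $\mathbf{Q}^{\Pi_A}$ is capped at $b$, so the ordering survives regardless of where $B$ routes (case~4); and $k_A=b-1$ with $k_B\le b-2$ is impossible. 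In each case the inequalities among the Rule parameters follow from monotonicity of $\sigma_{(\cdot)}$ and the numerical hypotheses: $l_{b-1}\le d_{b-1}$ gives $l_A=\sigma_{(l_{b-1})}\ge\sigma_{(d_{b-1})}=l_B$, and ``$d=N$ or $d\le d_{b-1}$'' gives either $l=\sigma_{(N)}=1$ or $l=\sigma_{(d)}\ge\sigma_{(d_{b-1})}=l_B$.

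For statement~(ii) I would track the scalar $T^\Pi(t):=\sum_{i\ge1}Q^\Pi_i(t)+L^\Pi(t)$, the number of tasks admitted or overflowed under $\Pi$ by time $t$ (the sum being read as the current task count if the buffer is infinite). A coupled arrival raises each of $T^{\Pi_A}$ and $T^{\Pi_B}$ by exactly $1$ (an admitted task raises the $Q$-sum by $1$, an overflowed task raises $L$ by $1$), so it leaves $T^{\Pi_A}-T^{\Pi_B}$ unchanged; a potential departure from index $k$ lowers $T^{\Pi_A}$ by $\mathbbm{1}_{[I_A(k)\ge1]}$ and $T^{\Pi_B}$ by $\mathbbm{1}_{[I_B(k)\ge1]}$, and $I_A(k)\le I_B(k)$ by~(i), so the $A$-decrement never exceeds the $B$-decrement. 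Hence $T^{\Pi_A}(t)-T^{\Pi_B}(t)$ is non-decreasing, so $\ge0$ for all $t$ once it is $\ge0$ at $t=0$; as $Q^{\Pi_A}_i\equiv0$ for $i>b$, this is~(ii). Then~(iii) follows by rearrangement:
\[
\Delta(t)=L^{\Pi_A}(t)-L^{\Pi_B}(t)\ \ge\ \sum_{i=1}^{b'}Q^{\Pi_B}_i(t)-\sum_{i=1}^{b}Q^{\Pi_A}_i(t)\ \ge\ \sum_{i=b+1}^{b'}Q^{\Pi_B}_i(t),
\]
the first inequality being~(ii) rearranged, the second following from~(i).

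The \emph{main obstacle} is the transition-by-transition bookkeeping of the second paragraph: one must verify that the common-permutation coupling makes the hypotheses of Proposition~\ref{prop: det_ord} hold deterministically at \emph{every} step and in \emph{every} configuration of the two current minima --- notably at the boundary values $k_A,k_B\in\{b-1,b\}$ and for departures hitting a queue empty in $A$ but not in $B$ --- while keeping track of which of $l,l_A,l_B$ is in force. No individual check is deep (each reduces to monotonicity of $i\mapsto\sigma_{(i)}$ together with one numerical hypothesis), but this case analysis is the core of the proof; it is also where one sees that the coupling is genuine, which is what upgrades the pathwise comparisons to the stochastic orderings $\le_{st}$ in~(i) and $\ge_{st}$ in~(ii) --- statement~(iii) being itself a pathwise inequality under the coupling.
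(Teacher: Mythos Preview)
Your proposal is correct and follows essentially the same route as the paper: you run the coupling of Section~\ref{subsec: coupling}, match each transition to an instance of $Rule(k,l,l_A,l_B)$, verify the hypotheses of Proposition~\ref{prop: det_ord} via monotonicity of $i\mapsto\sigma_{(i)}$ together with $l_{b-1}\le d_{b-1}$ and the dichotomy $d=N$ or $d\le d_{b-1}$, and then handle~(ii) by tracking total tasks and~(iii) by rearrangement. Your case analysis for arrivals is somewhat more explicit than the paper's, and your use of the indicator $\mathbbm{1}_{[I_A(k)\ge1]}\le\mathbbm{1}_{[I_B(k)\ge1]}$ in~(ii) is just a notational repackaging of the paper's observation that a departure from $\Pi_A$ forces one from $\Pi_B$; the substance is the same.
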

\begin{proof}
To prove the stochastic ordering we use the coupling of the schemes as described above and show that the ordering holds for the entire sample path. That is, the two processes arising from the above pair of schemes will be defined on a common probability space and it will then be shown that the ordering is maintained almost surely over all time.

Note that we shall consider only the event times $0=t_0<t_1<\ldots$, \emph{i.e.}~the time epochs when arrivals or potential service completions occur and apply forward induction to show that the ordering is preserved. By assumption the orderings hold at time $t_0 = 0$.\\

(i) The main idea of the proof is to use the coupling and show that at each event time the joint process of the two schemes follows a rule $Rule(k,l,l_A,l_B)$ described in Subsection~\ref{subsec: det_ord}, with some random $k$, $l$, $l_A$ and $l_B$ such that $l_A\geq l_B$ and either $l=1$ or $l\geq l_B$, and apply Proposition~\ref{prop: det_ord}. 
We now identify the rule at event time $t_1$ and verify that the conditions of Proposition~\ref{prop: det_ord} hold. If the event time $t_1$ is a potential departure epoch, then according to the coupling similarly as in the stack formation a random $k\in\{1, 2,\ldots, N\}$ will be chosen in both systems for a potential departure. Now assume that $t_1$ is an arrival epoch. 
In that case if the minimum queue length in both systems is less than $b-1$, then both schemes $ \Pi_A$ and $ \Pi_B$ will send the arriving task to the $\sigma_{(d)}^{th}$   queue. If the minimum queue length in scheme $ \Pi_A$ is $b-1$, then the incoming task is sent to the $\sigma_{(l_{b-1})}^{th}$ queue and if in scheme $ \Pi_B$ the minimum queue length is $b-1$, then the incoming task is sent to $\sigma_{(d_{b-1})}^{th}$ queue where we recall that  $(\sigma_1, \sigma_2,\ldots,\sigma_N)$ is a random permutation of $\{1,2,\ldots,N\}$. Therefore, observe that at each step $Rule(\sigma_{(d)},k,\sigma_{(l_{b-1})}, \sigma_{(d_{b-1})})$ is followed.

Now to check the conditions, first observe that 
$$\sigma_{(l_{b-1})}= \min_{i\leq l_{b-1}}\sigma_i\geq\min_{i\leq d_{b-1}}\sigma_i=\sigma_{(d_{b-1})},$$
where the second inequality is due to the assumption $l_{b-1}\leq d_{b-1}$. In addition, we have assumed either $d=N$ or $d\leq d_{b-1}$. If $d=N$, then the dispatcher sends the incoming task to the server with the minimum queue length which is the same as sending to stack 1 as in Proposition $\ref{prop: det_ord}$. On the other hand, $d\leq d_{b-1}$ implies
$$\sigma_{(d)}= \min_{i\leq d}\sigma_i\geq \min_{i\leq d_{b-1}}\sigma_i=\sigma_{(d_{b-1})}.$$
Therefore, assertion~\eqref{component_ordering} follows from Proposition~\ref{prop: det_ord}.\\

(ii) We again apply forward induction. Assume that the ordering holds at time $t_0$. If the next event time is an arrival epoch, then observe that both sides of the inequality in \eqref{upper bound} will increase, since if the incoming task is accepted, then the $Q$-values will increase and if it is rejected, then the $L$-value will increase.\\
On the other hand, if the next event time is a potential departure epoch, then it suffices to show that,  if the left-hand-side decreases, then the right-hand-side decreases as well. Indeed, from assertion~\eqref{component_ordering} we know that $Q^{ \Pi_A}_1\leq Q^{ \Pi_B}_1$ and hence we can see that if there is a departure from $ \Pi_A$ (\emph{i.e.}~the $k^{th}$ queue of $\Pi_A$ is non-empty), then there will be a departure from $ \Pi_B$ (\emph{i.e.}~the $k^{th}$ queue of $\Pi_B$ will be non-empty) as well.\\

(iii) Assertion~\eqref{delta_ineq} follows directly from~\eqref{component_ordering} and  \eqref{upper bound}.
\end{proof}

\subsection{Discussion}
It is worth emphasizing that Proposition~\ref{prop: stoch_ord}\eqref{component_ordering} is fundamentally different from the stochastic majorization results
for the ordinary JSQ policy, and below we contrast our methodology with some existing literature.
As noted earlier, the ensemble of stacks, arranged in non-decreasing
order, represents the empirical CDF of the queue length distribution
at the various servers.
Specifically, if we randomly select one of the servers, then the
probability that the queue length at that server is greater than
or equal to~$i$ at time~$t$ under policy~$\Pi$ equals
$\frac{1}{N} \mathbbm{E} Q_i^\Pi(t)$.
Thus assertion~\eqref{component_ordering} of Proposition~\ref{prop: stoch_ord} implies that if we select one
of the servers at random, then its queue length is stochastically
larger under policy~$\Pi_B$ than under policy~$\Pi_A$.

The latter property does generally \emph{not} hold when we compare
the ordinary JSQ policy with an alternative load balancing policy.
Indeed,  the class of load balancing schemes $\tilde{\Pi}^{(N)}$ (for the $N^{th}$ system say) considered in \cite{Towsley1992} consists of all the schemes that have instantaneous queue length information of all the servers and that have to send an incoming task to some server if there is at least some place available anywhere in the whole system. This means that a scheme can only discard an incoming task if the system is completely full. Observe that \emph{only} the JSQ policy lies both in the class $\Pi^{(N)}$ (defined in Section~\ref{sec: coupling}) and the class $\tilde{\Pi}^{(N)}$, because any scheme in $\Pi^{(N)}$ other than JSQ may reject an incoming task in some situations, where there might be some place available in the system. In this setup \cite{Towsley1992} shows that for any scheme $\Pi\in\tilde{\Pi}^{(N)}$, and for all $t\geq 0$,
\begin{align}\label{eq: towsley}
\sum_{i=1}^k Y_{(i)}^{JSQ}(t)&\leq_{st}\sum_{i=1}^k Y_{(i)}^{\Pi}(t),\mbox{ for } k=1,2,\ldots, N,\\
\{L^{JSQ}(t)\}_{t\geq 0}&\leq_{st}\{L^{\Pi}(t)\}_{t\geq 0},
\end{align}
where $Y^{\Pi}_{(i)}(t)$ is the $i^{th}$ largest queue length at time $t$ in the system following scheme $\Pi$ and $L^{\Pi}(t)$ is the total number of overflow events under policy $\Pi$ up to time $t$, as defined in Section~\ref{sec: model descr}. Observe that $Y_{(i)}^{\Pi}$ can be visualized as the $i^{th}$ largest vertical bar (or stack) as described in Subsection~\ref{subsec: det_ord}. Thus~\eqref{eq: towsley} says that the sum of the lengths of the $k$ largest \emph{vertical} stacks in a system following any scheme $\Pi\in\tilde{\Pi}^{(N)}$ is stochastically larger than or equal to that following the scheme JSQ for any $k=1,2,\ldots,N$. Mathematically, this ordering can be written as
$$\sum_{i = 1}^{b} \min\{k, Q_i^{JSQ}(t)\}  \leq_{st}
\sum_{i = 1}^{b} \min\{k, Q_i^{\Pi}(t)\},$$
for all $k = 1, \dots, N$.
In contrast, Proposition~\ref{prop: stoch_ord} shows that the length of the $i^{th}$ largest \emph{horizontal} bar in the system following some scheme $\Pi_A$ is stochastically smaller than that following some other scheme $\Pi_B$ if some conditions are satisfied. Also observe that the ordering between each of the horizontal bars (\emph{i.e.}~$Q_i$'s) implies the ordering between the sums of the $k$ largest vertical stacks, but not the other way around. Further it should be stressed that, in crude terms, JSQ in our class $\Pi^{(N)}$, plays the role of upper bound, whereas what Equation~\eqref{eq: towsley} implies is almost the opposite in nature to the conditions we require.

While in \cite{Towsley1992} no policies with admission control (where the dispatcher can discard an incoming task even if the system is not full) were considered, in a later paper \cite{towsley} and also in \cite{Towsley95} the class was extended to a class $\hat{\Pi}^{(N)}$ consisting of all the policies that have information about instantaneous queue lengths available and that can either send an incoming task to some server with available space or can reject an incoming task even if the system is not full. One can see that $\hat{\Pi}^{(N)}$ contains both $\tilde{\Pi}^{(N)}$ and $\Pi^{(N)}$ as subclasses. But then for such a class with admission control, \cite{towsley} notes that a stochastic ordering result like \eqref{eq: towsley} cannot possibly hold. Instead, what was shown in \cite{Towsley95} is that for all $t\geq 0$,
\begin{align}\label{eq:ordering total jobs}
\sum_{i=1}^{k}Y_{(i)}^{JSQ}(t)+L^{JSQ}(t)\leq_{st}\sum_{i=1}^k Y_{(i)}^{\Pi}(t)+L^{\Pi}(t)\mbox{ for all }k\in\{1,2,\ldots,N\}
\end{align}
Note that the ordering in \eqref{eq:ordering total jobs} is the same in spirit as the ordering in Proposition~\ref{prop: stoch_ord}\eqref{upper bound} and the inequalities in \eqref{eq:ordering total jobs} are in the language of \cite[Def.~14.4]{Towsley95}, \emph{weak sub-majorization by $p$}, where $p=L^{\Pi}(t)-L^{JSQ}(t)$. But in this case also our inequalities in Proposition~\ref{prop: stoch_ord}\eqref{component_ordering} imply something completely orthogonal to what is implied by \eqref{eq:ordering total jobs}. In other words, the stochastic ordering results in Proposition~\ref{prop: stoch_ord} provide both upper and lower bounds for the occupancy state of one scheme w.r.t another and 
are stronger than the stochastic majorization properties for the JSQ
policy existing in the literature. Hence we also needed to exploit a different proof methodology
than the majorization framework developed in~\cite{towsley, Towsley95, Towsley1992}.

\section{Convergence on diffusion scale}\label{sec: conv}
In this section we leverage the stochastic ordering established in Proposition~\ref{prop: stoch_ord} to prove the main result stated in Theorem~\ref{th: main}. All the inequalities below are stated as  almost sure statements with respect to the common probability space constructed under the associated coupling. We shall use this joint probability space to make the probability statements about the marginals.
\begin{proof}[Proof of Theorem~\ref{th: main}]
Let $\Pi=\Pi(N,d_1,\ldots,d_{b-1})$ be a load balancing scheme in the class $\Pi^{(N)}$. Denote by $\Pi_1$ the scheme $\Pi(N,d_1)$ with buffer size $b=2$ and let $\Pi_2$ denote the JIQ policy $\Pi(N,1)$ with buffer size $b=2$.

Observe that from Proposition~\ref{prop: stoch_ord} we have under the coupling defined in Subsection~\ref{subsec: coupling},
\begin{equation}\label{eq: bound}
\begin{split}
|Q^{\Pi}_i(t)-Q^{\Pi_2}_i(t)|&\leq |Q^{\Pi}_i(t)-Q^{\Pi_1}_i(t)|+|Q^{\Pi_1}_i(t)-Q^{\Pi_2}_i(t)|\\
&\leq |L^{\Pi_1}(t)-L^{\Pi}(t)|+|L^{\Pi_2}(t)-L^{\Pi_1}(t)|\\
&\leq 2L^{\Pi_2}(t),
\end{split}
\end{equation}
for all $i\geq 1$ and $t\geq 0$ with the understanding that $Q_j(t)=0$ for all $j>b$, for a scheme with buffer $b$. The third inequality above is due to Proposition~\ref{prop: stoch_ord}\eqref{delta_ineq}, which in particular says that $\{L^{\Pi_2}(t)\}_{t\geq 0}\geq \{L^{\Pi_1}(t)\}_{t\geq 0}\geq\{ L^{\Pi}(t)\}_{t\geq 0}$ almost surely under the coupling. Now we have the following lemma which we will prove below.

\begin{lemma}\label{lem: tight}
For all $t\geq 0$, under the assumption of Theorem~\ref{th: main}, $\{L^{\Pi_2}(t)\}_{N\geq 1}$ forms a tight sequence.
\end{lemma}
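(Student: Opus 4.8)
The plan is to bound $\mathbbm{E}[L^{\Pi_2}(t)]$ and then conclude by Markov's inequality. Recall that under $\Pi_2$ (the JIQ policy with buffer $b=2$) a task is dropped precisely when it arrives to find all servers busy — i.e.\ $Q_1^{\Pi_2}=N$, equivalently the number of idle servers $I:=N-Q_1^{\Pi_2}$ equals $0$ — \emph{and} the uniformly chosen server already holds two tasks, an event of conditional probability $Q_2^{\Pi_2}/N$. Hence $L^{\Pi_2}(t)-\lambda_N\int_0^t\mathbbm{1}_{[I(s)=0]}\,Q_2^{\Pi_2}(s)/N\,ds$ is a zero-mean martingale, so $\mathbbm{E}[L^{\Pi_2}(t)]=\lambda_N\,\mathbbm{E}\!\int_0^t\mathbbm{1}_{[I(s)=0]}\,Q_2^{\Pi_2}(s)/N\,ds$. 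I would therefore establish two ingredients: (a) the occupation-time estimate $\mathbbm{E}\!\int_0^t\mathbbm{1}_{[I(s)=0]}\,ds=O(1/\sqrt N)$, and (b) an a~priori bound $Q_2^{\Pi_2}(s)\leq C\sqrt N$ for all $s\leq t$ on an event whose probability can be made arbitrarily close to $1$, uniformly in $N$, by taking $C$ large. Granting these, on that event $Q_2^{\Pi_2}(s)/N\leq C/\sqrt N$, so with $\tau_C:=\inf\{s\geq 0:Q_2^{\Pi_2}(s)>C\sqrt N\}$ the stopped process satisfies $\mathbbm{E}[L^{\Pi_2}(t\wedge\tau_C)]\leq (\lambda_N C/\sqrt N)\,\mathbbm{E}\!\int_0^{t}\mathbbm{1}_{[I(s)=0]}\,ds=O(1)$ since $\lambda_N\sim N$; as $\mathbbm{P}(L^{\Pi_2}(t)>M)\leq \mathbbm{P}(\tau_C<t)+\mathbbm{E}[L^{\Pi_2}(t\wedge\tau_C)]/M$, letting $N\to\infty$, then $C\to\infty$, then $M\to\infty$ gives the tightness.

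For ingredient (a) I would use the elementary flow-balance identity for the idle-server count: writing $I$ as its initial value plus the number of idle-server creations (departures from servers holding exactly one task, compensator $\int_0^t(N-I(s)-Q_2^{\Pi_2}(s))\,ds$) minus the number of idle-server consumptions (arrivals meeting an idle server, compensator $\lambda_N\!\int_0^t\mathbbm{1}_{[I(s)\geq1]}\,ds$) and taking expectations, using $N-\lambda_N=\beta\sqrt N$, yields
\[
\lambda_N\,\mathbbm{E}\!\int_0^t\mathbbm{1}_{[I(s)=0]}\,ds=\mathbbm{E}[I(t)]-\mathbbm{E}[I(0)]-\beta\sqrt N\,t+\mathbbm{E}\!\int_0^t I(s)\,ds+\mathbbm{E}\!\int_0^t Q_2^{\Pi_2}(s)\,ds .
\]
Dropping the two non-positive terms, it suffices to show $\mathbbm{E}[I(s)]$, $\mathbbm{E}\!\int_0^t I(s)\,ds$ and $\mathbbm{E}\!\int_0^t Q_2^{\Pi_2}(s)\,ds$ are all $O(\sqrt N)$, for then the right-hand side is $O(\sqrt N)$ and dividing by $\lambda_N$ gives (a). The bound on $I$ comes from a monotone coupling: $I$ moves up at rate $N-I-Q_2^{\Pi_2}\leq N$ and down at rate $\lambda_N\mathbbm{1}_{[I\geq1]}$, hence is dominated by the birth–death process $\widetilde I$ with constant up-rate $N$ and down-rate $\lambda_N\mathbbm{1}_{[\widetilde I\geq1]}$; representing $\widetilde I$ as the Skorokhod reflection at $0$ of the free walk $I(0)+\beta\sqrt N\,s+(\text{martingale})$, whose martingale part has predictable variation $\leq 2Ns$, and applying Doob's $L^2$-inequality gives $\mathbbm{E}[\widetilde I(s)]\leq 2\,\mathbbm{E}[I(0)]+2\beta\sqrt N\,s+O(\sqrt{Ns})=O(\sqrt N)$ uniformly on $[0,t]$ — note the crude bound $\mathbbm{E}[\widetilde I(s)]\leq \mathbbm{E}[I(0)]+Ns$ is far too weak. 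The bound on $Q_2^{\Pi_2}$, and also ingredient (b), is where Proposition~\ref{prop: stoch_ord} and~\cite{EG15} come in: comparing $\Pi_2$ with the ordinary JSQ policy with infinite buffer started from the same occupancy state, Proposition~\ref{prop: stoch_ord}\eqref{component_ordering} gives $\{Q_2^{\Pi_2}(s)\}_s\leq_{st}\{Q_2^{\mathrm{JSQ}}(s)\}_s$, and the tightness of $Q_2^{\mathrm{JSQ}}/\sqrt N$ established in~\cite{EG15}, together with the attendant uniform moment bounds, yields $\mathbbm{E}[Q_2^{\Pi_2}(s)]=O(\sqrt N)$ and, via $\sup_{s\leq t}Q_2^{\Pi_2}(s)\leq_{st}\sup_{s\leq t}Q_2^{\mathrm{JSQ}}(s)$, exactly the control in (b).

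The step I expect to be the main obstacle is (a): the assertion that in the Halfin–Whitt regime the system sits in the ``all servers busy'' state only an $O(1/\sqrt N)$ fraction of the time. Although the flow-balance identity reduces this to the $O(\sqrt N)$ moment bounds, the real content is that those bounds must then be combined with the factor $Q_2^{\Pi_2}/N$ \emph{without decoupling} — $\mathbbm{1}_{[I=0]}$ and $Q_2^{\Pi_2}$ are positively correlated (a larger number of two-task servers makes an all-busy state more likely), so one cannot simply multiply marginal estimates, and the truncation at $\tau_C$ with the pathwise bound $Q_2^{\Pi_2}\leq C\sqrt N$ on $[0,t\wedge\tau_C]$ is precisely the device that makes the product integrable. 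Securing the a~priori control of $Q_2^{\Pi_2}$ cheaply — here I would lean on the stochastic ordering against JSQ rather than attempt a self-contained drift argument, which seems to loop back on the occupation-time estimate — is the other delicate point, and it is the reason the coupling of Section~\ref{sec: coupling} and the JSQ result of~\cite{EG15} are both needed.
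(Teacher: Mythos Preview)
Your overall strategy is sound and shares the key ingredients with the paper --- the compensator representation of $L^{\Pi_2}$, the stochastic comparison with JSQ via Proposition~\ref{prop: stoch_ord}, and the appeal to~\cite{EG15} --- but you take a noticeably different route for the occupation-time estimate. The paper does not attempt a separate flow-balance/birth--death argument for $\mathbbm{1}_{[I=0]}$ at all: it observes that Proposition~\ref{prop: stoch_ord}\eqref{component_ordering} already gives $Q_1^{\Pi_2}\leq Q_1^{\Pi_3}$ (with $\Pi_3=\Pi(N,N)$ the JSQ scheme), hence $\mathbbm{1}[Q_1^{\Pi_2}=N]\leq\mathbbm{1}[Q_1^{\Pi_3}=N]$, and so \emph{both} factors in the compensator are replaced by their JSQ counterparts. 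Tightness of $\sup_{s\leq T}Q_2^{\Pi_3}(s)/\sqrt N$ and of $\int_0^T(1/\sqrt N)\mathbbm{1}[Q_1^{\Pi_3}(s)=N]\lambda_N\,ds$ are then imported directly from~\cite{EG15} (the latter being the compensator of the reflection term $U_1$), and a martingale/quadratic-variation argument \`a la~\cite{PTRW07} finishes. Your flow-balance derivation plus the Skorokhod bound on the dominating M/M/1-type process is a more hands-on substitute for the second citation; it buys some independence from the finer structure of~\cite{EG15}, at the cost of extra work that the $Q_1$-ordering renders unnecessary.

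There is one technical gap in your write-up: you argue via expectations (flow balance in mean, then Markov), which requires $\mathbbm{E}[I(0)]=O(\sqrt N)$ and $\mathbbm{E}[Q_2^{\mathrm{JSQ}}(s)]=O(\sqrt N)$. The hypothesis of Theorem~\ref{th: main} only asserts $X_i^\Pi(0)\dist X_i(0)$, i.e.\ tightness of $I(0)/\sqrt N$ and $Q_2(0)/\sqrt N$, and weak convergence in~\cite{EG15} likewise yields tightness rather than uniform first moments; neither gives the $O(\sqrt N)$ expectation bounds you invoke. The fix is to run the same argument at the level of stochastic boundedness rather than means: the flow-balance identity holds pathwise up to a martingale of quadratic variation $O(Nt)$, so tightness of $I(t)/\sqrt N$, $\int_0^t I/\sqrt N$, $\int_0^t Q_2^{\Pi_2}/\sqrt N$ gives $\int_0^t\mathbbm{1}_{[I=0]}\,ds=O_p(1/\sqrt N)$; combined with your truncation at $\tau_C$ this yields $L^{\Pi_2}(t)=O_p(1)$ without any moment hypotheses. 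The paper's route sidesteps this issue entirely by never leaving the stochastic-boundedness framework.
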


Since $L^{\Pi_2}(t)$ is non-decreasing in $t$, the above lemma in particular implies that
\begin{equation}\label{eq: conv 0}
\sup_{t\in[0,T]}\frac{L^{\Pi_2}(t)}{\sqrt{N}}\prob 0.
\end{equation}
For any scheme $\Pi\in\Pi^{(N)}$, from \eqref{eq: bound} we know that 
$$\{Q^{\Pi_2}_i(t)-2L^{\Pi_2}(t)\}_{t\geq 0}\leq\{Q^{\Pi}_i(t)\}_{t\geq 0}\leq\{Q^{\Pi_2}_i(t)+2L^{\Pi_2}(t)\}_{t\geq 0}.$$
Combining \eqref{eq: bound} and \eqref{eq: conv 0} shows that if the weak limits under the $\sqrt{N}$ scaling exist with respect to the Skorohod $J_1$-topology, they must be the same for all the schemes in the class $\Pi^{(N)}$. Also from Theorem 2 in \cite{EG15} we know that the weak limit for $\Pi(N,N)$ exists and the common weak limit for the first two components can be described by the unique solution in $D\times D$ of the stochastic differential equations in \eqref{eq: main theorem}. Hence the proof of Theorem~\ref{th: main} is complete.
\end{proof}

\begin{proof}[Proof of Lemma~\ref{lem: tight}]
First we consider the evolution of $L^{\Pi_2}(t)$ as the following unit jump counting process. A task arrival occurs at rate $\lambda_N$ at the dispatcher, and if $Q_1^{\Pi_1}=N$, then it sends it to a server chosen uniformly at random. If the chosen server has queue length 2, then $L^{\Pi_2}$ is increased by 1. It is easy to observe that this evolution can be equivalently described as follows. If $Q^{\Pi_2}_1(t)=N$, then each of the servers having queue length 2 starts increasing $L^{\Pi_2}$ by 1 at rate $\lambda_N/N$. From this description we have
\begin{equation}\label{eq:L_rep}
L^{\Pi_2}(t)=A\left(\int_0^t\frac{\lambda_N}{N}Q^{\Pi_2}_2(s)\mathbbm{1}[Q^{\Pi_2}_1(s)=N]ds\right)
\end{equation}
with $A(\cdot)$ being a unit rate Poisson process. Now using Proposition~\ref{prop: stoch_ord} it follows that $\mathbbm{1}[Q^{\Pi_2}_1(s)=N]\leq \mathbbm{1}[Q^{\Pi_3}_1(s)=N]$ and $Q^{\Pi_2}_2(s)\leq Q^{\Pi_3}_2(s)$ where $\Pi_3=\Pi(N,N)$. Therefore, it is enough to prove the stochastic boundedness \cite[Def.~5.4]{PTRW07} of the sequence 
\begin{equation}
\Gamma^{(N)}(t):=A\left(\int_0^t\frac{\lambda_N}{N}Q^{\Pi_3}_2(s)\mathbbm{1}[Q^{\Pi_3}_1(s)=N]ds\right).
\end{equation}
To prove this we shall use the martingale techniques described for instance in \cite{PTRW07}. 
Define the filtration $\mathbf{F}\equiv\{\mathcal{F}_t:t\geq 0\}$, where  for $t\geq 0$,
$$\mathcal{F}_t:=\sigma\left(Q^{\Pi_3}(0),A\left(\int_0^t\frac{\lambda_N}{N}Q^{\Pi_3}_2(s)\mathbbm{1}[Q^{\Pi_3}_1(s)=N]ds\right), Q^{\Pi_3}_1(s), Q^{\Pi_3}_2(s): 0\leq s\leq t\right).$$
Then using a random time change of unit rate Poisson process \cite[Lemma 3.2]{PTRW07} and similar arguments to those in \cite[Lemma 3.4]{PTRW07}, we have the next lemma.
\begin{lemma}
With respect to the filtration $\mathbf{F}$,
\begin{equation}
M^{(N)}(t):= A\left(\int_0^t\frac{\lambda_N}{N}Q^{\Pi_3}_2(s)\mathbbm{1}[Q^{\Pi_3}_1(s)=N]ds\right)-\int_0^t\frac{\lambda_N}{N}Q^{\Pi_3}_2(s)\mathbbm{1}[Q^{\Pi_3}_1(s)=N]ds
\end{equation}
is a square-integrable martingale with $\mathbf{F}$-compensator $$I(t)=\int_0^t\frac{\lambda_N}{N}Q^{\Pi_3}_2(s)\mathbbm{1}[Q^{\Pi_3}_1(s)=N]ds.$$ Moreover, the predictable quadratic variation process is given by $\langle M^{(N)}\rangle(t)=I(t).$
\end{lemma}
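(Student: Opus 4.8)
The plan is to obtain this lemma as a direct application of the random time-change machinery for Poisson processes, exactly in the spirit of \cite[Lemmas 3.2 and 3.4]{PTRW07}. Write $I(t)=\int_0^t\frac{\lambda_N}{N}Q^{\Pi_3}_2(s)\mathbbm{1}[Q^{\Pi_3}_1(s)=N]\,ds$. The first observation is that $I$ is a continuous, non-decreasing, $\mathbf{F}$-adapted process with $I(0)=0$, and since $Q_2^{\Pi_3}(s)\leq N$ we have the deterministic bound $I(t)\leq \lambda_N t<\infty$ for every $t$. In particular, for each fixed $t$, $I(t)$ is a finite stopping time in the time scale of the driving unit-rate Poisson process $A(\cdot)$, so the composition $A(I(t))$ is well defined and $\Gamma^{(N)}(t)=A(I(t))$ is a counting process adapted to $\mathbf{F}$.

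Next I would invoke the random time-change lemma \cite[Lemma 3.2]{PTRW07}: if $A$ is a unit-rate Poisson process and $I$ is a non-decreasing, finite, adapted process, then $A(I(t))-I(t)$ is an $\mathbf{F}$-local martingale, and this is precisely $M^{(N)}(t)$. To upgrade ``local'' to ``square-integrable,'' I would use the bound $I(t)\leq\lambda_N t$ together with the tower property: conditioning on $\mathcal{F}_t\supseteq\sigma(I(t))$ and using that a Poisson random variable with (random, but bounded) mean $I(t)$ satisfies $\mathbbm{E}[A(I(t))^2\mid \mathcal{F}_t]=I(t)+I(t)^2$, one gets $\mathbbm{E}[A(I(t))^2]\leq \lambda_N t+\lambda_N^2 t^2<\infty$ for each fixed $N$ and $t$, so $M^{(N)}(t)\in L^2$. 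Hence the localizing sequence can be dispensed with and $M^{(N)}$ is a genuine (square-integrable) martingale; its compensator in the Doob--Meyer sense is $I(t)$ by construction.

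For the predictable quadratic variation, I would apply the same time-change argument to the second-order martingale: for a unit-rate Poisson process, $(A(u)-u)^2-u$ is a martingale in $u$, and composing with the time change $u=I(t)$ (again legitimate because $I(t)$ is a finite adapted stopping time, by \cite[Lemma 3.2]{PTRW07} and the optional stopping theorem, mirroring \cite[Lemma 3.4]{PTRW07}) shows that $(M^{(N)}(t))^2-I(t)$ is an $\mathbf{F}$-martingale. By uniqueness of the Doob--Meyer decomposition this identifies $\langle M^{(N)}\rangle(t)=I(t)$.

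\textbf{Main obstacle.} The only delicate point is the measurability/adaptedness bookkeeping: one must check that $I$ is adapted to the specific filtration $\mathbf{F}$ defined in the excerpt (which includes $Q_1^{\Pi_3}$, $Q_2^{\Pi_3}$ and the time-changed Poisson process up to time $t$) and that the time-changed process $A(I(t))$ remains a martingale with respect to $\mathbf{F}$ rather than merely with respect to the natural filtration of $A$. This is exactly what \cite[Lemma 3.2]{PTRW07} is designed to handle, so once the boundedness $I(t)\leq\lambda_N t$ and adaptedness are in place, the rest is routine; the square-integrability and the $\langle M^{(N)}\rangle=I$ identity follow with no further work beyond the elementary moment computation above.
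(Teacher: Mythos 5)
Your proposal follows essentially the same route as the paper, which proves this lemma simply by invoking the random time-change result \cite[Lemma 3.2]{PTRW07} together with the argument of \cite[Lemma 3.4]{PTRW07}, exactly the machinery you spell out. The only blemish is the square-integrability step: as written, $\mathbbm{E}[A(I(t))^2\mid\mathcal{F}_t]=I(t)+I(t)^2$ cannot hold since $A(I(t))$ is $\mathcal{F}_t$-measurable, but the intended bound follows at once from the pathwise domination $A(I(t))\leq A(\lambda_N t)$ (using $I(t)\leq\lambda_N t$ and monotonicity of $A$), so $\mathbbm{E}[A(I(t))^2]\leq\lambda_N t+(\lambda_N t)^2<\infty$ and the rest of your argument stands.
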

Now we apply Lemma 5.8 in \cite{PTRW07} which gives a stochastic boundedness criterion for square-integrable martingales. 
\begin{lemma}
\begin{normalfont}
\cite[Lemma 5.8]{PTRW07}
\end{normalfont}
Suppose that, for each $N\geq 1$, $M^{(N)}\equiv \{M^{(N)}(t):t\geq 0\}$ is a square-integrable martingale (with respect to a specified filtration) with predictable quadratic variation process $\langle M^{(N)}\rangle\equiv\{\langle M^{(N)}\rangle(t):t\geq 0\}$. If the sequence of random variables $\{\langle M^{(N)}\rangle(T): N\geq 1\}$ is stochastically bounded in $\mathbbm{R}$ for each $T>0$, then the sequence of stochastic processes $\{M^{(N)}: N\geq 1\}$ is stochastically bounded in $D$.
\end{lemma}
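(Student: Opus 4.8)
The plan is to prove this entirely from two classical martingale facts — Doob's $L^2$ maximal inequality and the identity $\mathbbm{E}[M^{(N)}(T)^2]=\mathbbm{E}[\langle M^{(N)}\rangle(T)]$, which holds because $M^{(N)}(t)^2-\langle M^{(N)}\rangle(t)$ is a martingale (using the convention $M^{(N)}(0)=0$, as in the application) — together with a localization step that upgrades the merely \emph{stochastic} boundedness of $\langle M^{(N)}\rangle(T)$ to the $L^2$ control that Doob's inequality requires. By the definition of stochastic boundedness in $D$ \cite[Def.~5.4]{PTRW07}, it suffices to show that for each fixed $T>0$ the suprema $\|M^{(N)}\|_T:=\sup_{0\leq t\leq T}|M^{(N)}(t)|$ are stochastically bounded in $\mathbbm{R}$, i.e.\ that $\lim_{c\to\infty}\sup_{N\geq 1}\mathbbm{P}(\|M^{(N)}\|_T\geq c)=0$.

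I would first record why Doob's inequality cannot be applied directly. Since $M^{(N)}$ is a square-integrable martingale, $M^{(N)}(\cdot)^2$ is a non-negative submartingale, and Doob gives $\mathbbm{E}[\sup_{t\leq T}M^{(N)}(t)^2]\leq 4\,\mathbbm{E}[M^{(N)}(T)^2]=4\,\mathbbm{E}[\langle M^{(N)}\rangle(T)]$. This would finish the proof by Chebyshev \emph{if} the expectations $\mathbbm{E}[\langle M^{(N)}\rangle(T)]$ were bounded in $N$. But the hypothesis only supplies stochastic boundedness (tightness) of $\langle M^{(N)}\rangle(T)$ in $\mathbbm{R}$, which is strictly weaker than an $L^1$ bound. \textbf{Closing this gap is the main obstacle}, and essentially the whole content of the proof.

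The device I would use is truncation by a stopping time. Fix $T>0$ and $\eta>0$, and set $\tau_N^\eta:=\inf\{t\geq 0:\langle M^{(N)}\rangle(t)\geq\eta\}$, a stopping time since $\langle M^{(N)}\rangle$ is adapted and non-decreasing. As the compensator here is absolutely continuous, hence continuous, one has $\langle M^{(N)}\rangle(t\wedge\tau_N^\eta)\leq\eta$ for all $t$. The stopped process $\{M^{(N)}(t\wedge\tau_N^\eta)\}_{t\geq 0}$ is again a square-integrable martingale, with predictable quadratic variation $\langle M^{(N)}\rangle(\cdot\wedge\tau_N^\eta)$, so Doob's inequality and the quadratic-variation identity give
\begin{equation*}
\mathbbm{E}\Big[\sup_{0\leq t\leq T}M^{(N)}(t\wedge\tau_N^\eta)^2\Big]\leq 4\,\mathbbm{E}\big[\langle M^{(N)}\rangle(T\wedge\tau_N^\eta)\big]\leq 4\eta,
\end{equation*}
whence, by Chebyshev's inequality, $\mathbbm{P}\big(\sup_{t\leq T}|M^{(N)}(t\wedge\tau_N^\eta)|\geq c\big)\leq 4\eta/c^2$ for every $c>0$. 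Crucially, this bound is \emph{uniform in $N$} and no longer involves $\mathbbm{E}[\langle M^{(N)}\rangle(T)]$.

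Finally I would stitch the two estimates together. On the event $\{\tau_N^\eta>T\}=\{\langle M^{(N)}\rangle(T)<\eta\}$ the stopped and unstopped paths agree on $[0,T]$, so
\begin{align*}
\mathbbm{P}\big(\|M^{(N)}\|_T\geq c\big)
&\leq \mathbbm{P}\big(\sup_{t\leq T}|M^{(N)}(t\wedge\tau_N^\eta)|\geq c\big)+\mathbbm{P}\big(\langle M^{(N)}\rangle(T)\geq\eta\big)\\
&\leq \frac{4\eta}{c^2}+\mathbbm{P}\big(\langle M^{(N)}\rangle(T)\geq\eta\big).
\end{align*}
Given $\epsilon>0$, I would first invoke the hypothesis to fix $\eta$ so large that $\sup_N\mathbbm{P}(\langle M^{(N)}\rangle(T)\geq\eta)<\epsilon/2$, and then, with $\eta$ frozen, choose $c$ so large that $4\eta/c^2<\epsilon/2$; this yields $\sup_N\mathbbm{P}(\|M^{(N)}\|_T\geq c)<\epsilon$, which is exactly stochastic boundedness in $D$. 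Equivalently, the stopping-time step can be packaged once and for all through Lenglart's domination inequality applied to $M^{(N)}(\cdot)^2$ dominated by $\langle M^{(N)}\rangle$, producing the same two-term bound directly. Beyond the conceptual obstacle already flagged, the only technical care needed is for a possibly discontinuous compensator in the fully general statement, where $\langle M^{(N)}\rangle(\tau_N^\eta)$ may overshoot $\eta$ by a single jump; this is handled by the standard estimate $\langle M^{(N)}\rangle(t\wedge\tau_N^\eta)\leq\eta+\sup_{s}\Delta\langle M^{(N)}\rangle(s)$ and is vacuous here, since our compensators are continuous.
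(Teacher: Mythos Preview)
Your proof is correct, but note that the paper does not actually prove this lemma at all: it is simply quoted verbatim as \cite[Lemma~5.8]{PTRW07} and used as a black box, so there is nothing in the paper to compare against. What you have supplied is essentially the standard proof behind that cited result --- localize by the stopping time $\tau_N^\eta=\inf\{t:\langle M^{(N)}\rangle(t)\geq\eta\}$, apply Doob's $L^2$ maximal inequality to the stopped martingale to get a bound uniform in~$N$, and then split on $\{\tau_N^\eta>T\}$ versus its complement, the latter controlled by the assumed stochastic boundedness of $\langle M^{(N)}\rangle(T)$. You also correctly observe that this is exactly Lenglart's domination inequality in disguise, which is how the result is packaged in \cite{PTRW07}. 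The remark about overshoot for discontinuous compensators is a nice touch and, as you say, vacuous in the present application since the compensator $I(t)$ is an ordinary Lebesgue integral.
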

Therefore, it only remains to show the stochastic boundedness of $\{\langle M^{(N)}\rangle(T):N\geq 1\}$ for each $T>0$. Fix a $T>0$ and observe that
\begin{equation}\label{eq: qvp_boundedness}
\begin{split}
\langle M^{(N)}\rangle(T)&=\frac{\lambda_N}{N}\int_0^T\frac{Q^{\Pi_3}_2(s)}{\sqrt{N}}\mathbbm{1}[Q^{\Pi_3}_1(s)=N]ds\\
&\leq \left[\sup_{t\in [0,T]}\frac{Q^{\Pi_3}_2(s)}{\sqrt{N}}\right]\times\left[\int_0^{T}\frac{1}{\sqrt{N}}\mathbbm{1}[Q^{\Pi_3}_1(s)=N]\lambda_N ds\right].
\end{split}
\end{equation}
From \cite{EG15} we know that $\sup_{t\in [0,T]}Q^{\Pi_3}_2(t)/\sqrt{N}$ and $\int_0^{T}1/\sqrt{N}\mathbbm{1}[Q^{\Pi_3}_1(s)=N]dA(\lambda_Ns)$  are both tight. Moreover, since $\int_0^{T}1/\sqrt{N}\mathbbm{1}[Q^{\Pi_3}_1(s)=N]\lambda_N ds$ is the intensity function of the stochastic integral $\int_0^{T}1/\sqrt{N}\mathbbm{1}[Q^{\Pi_3}_1(s)=N]dA(\lambda_Ns)$, which is a tight sequence, we have the following lemma.
\begin{lemma}
For all fixed $T\geq 0$, 
$\int_0^{T}\frac{1}{\sqrt{N}}\mathbbm{1}[Q^{\Pi_3}_1(s)=N]\lambda_N ds$ is tight as a sequence in $N$.
\end{lemma}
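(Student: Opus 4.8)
The plan is to realize the integral in the lemma as the \emph{compensator} of a point process that \cite{EG15} has already shown to be tight, and then to transfer tightness from the point process to its compensator by a localized second-moment argument. Write $\Pi_3=\Pi(N,N)$ and set
\[
Z^{(N)}(t):=\int_0^t \frac{1}{\sqrt N}\mathbbm{1}[Q_1^{\Pi_3}(s)=N]\,dA(\lambda_N s),\qquad
B^{(N)}(t):=\int_0^t \frac{\lambda_N}{\sqrt N}\mathbbm{1}[Q_1^{\Pi_3}(s)=N]\,ds,
\]
so that $B^{(N)}(T)$ is precisely the quantity to be shown tight, while $Z^{(N)}(T)$ is tight by \cite{EG15} (as recorded immediately above the lemma). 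Since the compensator of $A(\lambda_N\cdot)$ is $t\mapsto\lambda_N t$ and $\mathbbm{1}[Q_1^{\Pi_3}(\cdot)=N]$ is a bounded predictable process, $\tilde M^{(N)}:=Z^{(N)}-B^{(N)}$ is a square-integrable martingale (with respect to the natural filtration of $Q^{\Pi_3}$ and $A$); moreover $\sqrt N\,Z^{(N)}$ is a simple counting process, so the predictable quadratic variation of its compensated version equals its compensator, and after the $1/\sqrt N$ scaling this gives $\langle\tilde M^{(N)}\rangle(t)=\tfrac1N\int_0^t\lambda_N\mathbbm{1}[Q_1^{\Pi_3}(s)=N]\,ds=\tfrac{1}{\sqrt N}B^{(N)}(t)$. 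I would also note at the outset that $Z^{(N)}$ is non-decreasing.

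The core of the argument is a localization at the level one is trying to bound. Fix $T>0$ and, for $M>0$, let $\tau_M:=\inf\{t\ge0: B^{(N)}(t)\ge M\}$. Because $B^{(N)}$ is continuous and starts from $0$, there is no overshoot, so $B^{(N)}(t\wedge\tau_M)\le M$ for all $t$, hence $\langle\tilde M^{(N)}\rangle(T\wedge\tau_M)=\tfrac{1}{\sqrt N}B^{(N)}(T\wedge\tau_M)\le M$, and by optional stopping $\mathbbm{E}\big[\tilde M^{(N)}(T\wedge\tau_M)^2\big]=\mathbbm{E}\big[\langle\tilde M^{(N)}\rangle(T\wedge\tau_M)\big]\le M$. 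On the event $\{B^{(N)}(T)>M\}$ we have $\tau_M\le T$ and, by continuity, $B^{(N)}(\tau_M)=M$; combining this with $B^{(N)}=Z^{(N)}-\tilde M^{(N)}$ and the monotonicity of $Z^{(N)}$ yields $M\le Z^{(N)}(T)+|\tilde M^{(N)}(T\wedge\tau_M)|$. Splitting into $\{Z^{(N)}(T)\ge M/2\}$ and $\{|\tilde M^{(N)}(T\wedge\tau_M)|\ge M/2\}$ and applying Chebyshev to the second gives
\[
\mathbbm{P}\big(B^{(N)}(T)>M\big)\ \le\ \mathbbm{P}\big(Z^{(N)}(T)\ge M/2\big)+\frac{4}{M}.
\]
Taking the supremum over $N$ and letting $M\to\infty$, the first term vanishes by tightness of $\{Z^{(N)}(T)\}_N$ and the second vanishes trivially, which is exactly stochastic boundedness, i.e.\ tightness in $\mathbbm{R}$, of $\{B^{(N)}(T)\}_N$.

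The step I expect to be the real obstacle — and the reason the localization is needed — is the apparent circularity: the naive $L^2$ bound on $\tilde M^{(N)}$ is phrased in terms of $\mathbbm{E}[B^{(N)}(T)]$, which is the very quantity one wants to control and which is not known a priori to be bounded uniformly in $N$. Stopping at the first hitting time of level $M$ breaks the circle, since the absolute continuity of $B^{(N)}$ precludes an overshoot and makes the stopped quadratic variation deterministically at most $M$. The remaining ingredients are routine and I would state them without proof: that $\tilde M^{(N)}$ is a genuine (not merely local) square-integrable martingale, being dominated by the Poisson process $A(\lambda_N\cdot)$; that for a point process the predictable quadratic variation of the compensated process coincides with the compensator; and that $Z^{(N)}$ is non-decreasing so stopping cannot increase it. (Alternatively one could invoke a functional law of large numbers for the time-changed Poisson process $A(\lambda_N\cdot)$, but the martingale route is more direct given the tools already assembled from \cite{PTRW07}.)
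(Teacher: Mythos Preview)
Your argument is correct and is aligned with the paper's reasoning, but you have supplied considerably more than the paper does. In the paper the lemma is justified by a single sentence: $B^{(N)}(T)$ is the intensity (compensator) of the stochastic integral $Z^{(N)}(T)=\int_0^T N^{-1/2}\mathbbm{1}[Q_1^{\Pi_3}(s)=N]\,dA(\lambda_N s)$, whose tightness is inherited from~\cite{EG15}, and the paper simply asserts that tightness passes from the counting process to its compensator. Your localized second-moment bound is a clean way to make this transfer rigorous; it is essentially a special case of a Lenglart-type domination inequality, derived directly. One minor sharpening: since $\langle\tilde M^{(N)}\rangle(T\wedge\tau_M)=N^{-1/2}B^{(N)}(T\wedge\tau_M)\le M/\sqrt N$, the Chebyshev term is actually $4/(M\sqrt N)$ rather than $4/M$, which of course only strengthens the conclusion.
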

Hence, both terms on the right-hand side of \eqref{eq: qvp_boundedness} are stochastically bounded and the resulting stochastic bound on $\langle M^{(N)}\rangle(T)$ completes the proof.
\end{proof}

\section{Conclusion}\label{sec:conclusion}
In the present paper we have considered a system with symmetric
Markovian parallel queues and a single dispatcher.
We established the diffusion limit of the queue process in the
Halfin-Whitt regime for a wide class of load balancing schemes which
always assign an incoming task to an idle server, if there is any.
The results imply that assigning tasks to idle servers whenever
possible is sufficient to achieve diffusion level optimality.
Thus, using more fine-grained queue state information will increase the
communication burden and potentially impact the scalability in
large-scale deployments without significantly improving the performance.

In ongoing work we are aiming to extend the analysis to the stationary
distribution of the queue process, and in particular to quantify the
performance deviation from a system with a single centralized queue.
It would also be interesting to generalize the results to scenarios
where the individual nodes have general state-dependent service
rates rather than constant service rates.

\section*{Acknowledgments}
This research was financially supported by an ERC Starting Grant and by The Netherlands Organization for Scientific Research (NWO) through TOP-GO grant 613.001.012 and Gravitation Networks grant 024.002.003. Dr. Whiting was supported in part by an Australian Research grant DP-1592400 and in part by a Macquarie University Vice-Chancellor Innovation Fellowship.

\bibliographystyle{apa}

\bibliography{bib_jiq}

\begin{thebibliography}{}

\bibitem[\protect\astroncite{Badonnel and Burgess}{2008}]{BB08}
Badonnel, R. and Burgess, M. (2008).
\newblock {Dynamic pull-based load balancing for autonomic servers}.
\newblock In {\em NOMS 2008 - IEEE/IFIP Network Operations and Management
  Symposium: Pervasive Management for Ubiquitous Networks and Services}, pages
  751--754.

\bibitem[\protect\astroncite{Bramson et~al.}{2012}]{BLP12}
Bramson, M., Lu, Y., and Prabhakar, B. (2012).
\newblock Asymptotic independence of queues under randomized load balancing.
\newblock {\em Queueing Systems}, 71(3):247--292.

\bibitem[\protect\astroncite{Ephremides et~al.}{1980}]{EVW80}
Ephremides, A., Varaiya, P., and Walrand, J. (1980).
\newblock A simple dynamic routing problem.
\newblock {\em IEEE Transactions on Automatic Control}, 25(4):690--693.

\bibitem[\protect\astroncite{Eschenfeldt and Gamarnik}{2015}]{EG15}
Eschenfeldt, P. and Gamarnik, D. (2015).
\newblock Join the shortest queue with many servers. {T}he heavy traffic
  asymptotics.
\newblock {\em arXiv:1502.00999}.

\bibitem[\protect\astroncite{Gupta et~al.}{2007}]{GHSW07}
Gupta, V., Harchol-Balter, M., Sigman, K., and Whitt, W. (2007).
\newblock Analysis of join-the-shortest-queue routing for web server farms.
\newblock {\em Performance Evaluation}, 64(9):1062--1081.

\bibitem[\protect\astroncite{Halfin and Whitt}{1981}]{HW81}
Halfin, S. and Whitt, W. (1981).
\newblock Heavy-traffic limits for queues with many exponential servers.
\newblock {\em Operations Research}, 29(3):567--588.

\bibitem[\protect\astroncite{Jonckheere}{2006}]{Jonckheere06}
Jonckheere, M. (2006).
\newblock Insensitive versus efficient dynamic load balancing in networks
  without blocking.
\newblock {\em Queueing Systems}, 54(3):193--202.

\bibitem[\protect\astroncite{Lu et~al.}{2011}]{LXKGLG11}
Lu, Y., Xie, Q., Kliot, G., Geller, A., Larus, J.~R., and Greenberg, A. (2011).
\newblock Join-idle-queue: A novel load balancing algorithm for dynamically
  scalable web services.
\newblock {\em Performance Evaluation}, 68(11):1056--1071.

\bibitem[\protect\astroncite{Mitzenmacher}{2001}]{Mitzenmacher01}
Mitzenmacher, M. (2001).
\newblock {The power of two choices in randomized load balancing}.
\newblock {\em IEEE Transactions on Parallel and Distributed Systems},
  12(10):1094--1104.

\bibitem[\protect\astroncite{Pang et~al.}{2007}]{PTRW07}
Pang, G., Talreja, R., and Whitt, W. (2007).
\newblock Martingale proofs of many-server heavy-traffic limits for {M}arkovian
  queues.
\newblock {\em Probability Surveys}, 4:193--267.

\bibitem[\protect\astroncite{Sparaggis et~al.}{1994}]{towsley}
Sparaggis, P.~D., Towsley, D., and Cassandras, C.~G. (1994).
\newblock {Sample path criteria for weak majorization}.
\newblock {\em Advances in Applied Probability}, 26(1):155--171.

\bibitem[\protect\astroncite{Stolyar}{2015}]{S15}
Stolyar, A.~L. (2015).
\newblock Pull-based load distribution in large-scale heterogeneous service
  systems.
\newblock {\em Queueing Systems}, 80(4):341--361.

\bibitem[\protect\astroncite{Towsley}{1995}]{Towsley95}
Towsley, D. (1995).
\newblock {Application of majorization to control problems in queueing
  systems}.
\newblock In Chr\'etienne, P., Coffman, E.~G., Lenstra, J.~K., and Liu, Z.,
  editors, {\em Scheduling Theory and its Applications}. John Wiley \& Sons,
  Chichester.

\bibitem[\protect\astroncite{Towsley et~al.}{1992}]{Towsley1992}
Towsley, D., Sparaggis, P., and Cassandras, C. (1992).
\newblock {Optimal routing and buffer allocation for a class of finite capacity
  queueing systems}.
\newblock {\em IEEE Transactions on Automatic Control}, 37(9):1446--1451.

\bibitem[\protect\astroncite{Vvedenskaya et~al.}{1996}]{VDK96}
Vvedenskaya, N.~D., Dobrushin, R.~L., and Karpelevich, F.~I. (1996).
\newblock Queueing system with selection of the shortest of two queues: An
  asymptotic approach.
\newblock {\em Problemy Peredachi Informatsii}, 32(1):20--34.

\bibitem[\protect\astroncite{Weber}{1978}]{W78}
Weber, R.~R. (1978).
\newblock On the optimal assignment of customers to parallel servers.
\newblock {\em Journal of Applied Probability}, 15:406--413.

\bibitem[\protect\astroncite{Whitt}{1986}]{Whitt86}
Whitt, W. (1986).
\newblock Deciding which queue to join: Some counterexamples.
\newblock {\em Operations Research}, 34(1):55--62.

\bibitem[\protect\astroncite{Winston}{1977}]{Winston77}
Winston, W. (1977).
\newblock Optimality of the shortest line discipline.
\newblock {\em Journal of Applied Probability}, 14(1):181--189.

\end{thebibliography}
\end{document}